\documentclass[12pt]{amsart}
\usepackage{amssymb}
\usepackage[all]{xy}

\textwidth=160mm
\textheight=200mm
\topmargin=10mm
\hoffset=-20mm

\newtheorem{theorem}{Theorem}[section]
\newtheorem{definition}[theorem]{Definition}
\newtheorem{proposition}[theorem]{Proposition}

\begin{document}

\title{Tannakian duality for affine homogeneous spaces}

\author{Teodor Banica}
\address{T.B.: Department of Mathematics, University of Cergy-Pontoise, F-95000 Cergy-Pontoise, France. {\tt teo.banica@gmail.com}}

\subjclass[2010]{46L65 (46L89)}
\keywords{Quantum isometry, Noncommutative manifold}

\begin{abstract}
Associated to any closed quantum subgroup $G\subset U_N^+$ and any index set $I\subset\{1,\ldots,N\}$ is a certain homogeneous space $X_{G,I}\subset S^{N-1}_{\mathbb C,+}$, called affine homogeneous space. We discuss here the abstract axiomatization of the algebraic manifolds $X\subset S^{N-1}_{\mathbb C,+}$ which can appear in this way, by using Tannakian duality methods.
\end{abstract}

\maketitle

\section*{Introduction}

The compact quantum groups were introduced by Woronowicz in \cite{wo1}, \cite{wo2}. These are abstract objects, having no points in general, generalizing at the same time the usual compact groups, and the duals of the discrete groups. The compact quantum groups have no Lie algebra in general, but analogues of the Peter-Weyl theory, of Tannakian duality, and of the Weingarten integration formula, are available. Thus, we have here some interesting examples of noncommutative manifolds, which are definitely algebraic, and which are probably a bit Riemannian too, because we can integrate on them.

The present work is a continuation of \cite{ban}, which was concerned with the integration theory over the associated homogeneous spaces. The main finding there was the fact that, in order to have a good integration theory, one must restrict the attention to a certain special class of homogeneous spaces, called ``affine''. To be more precise, associated to any closed subgroup $G\subset U_N^+$ and any index set $I\subset\{1,\ldots,N\}$ is a certain homogeneous space $X_{G,I}\subset S^{N-1}_{\mathbb C,+}$, called affine. In the classical case this space appears as $X_{G,I}=G/(G\cap C_N^I)$, where $C_N^I\subset U_N$ is the group of unitaries fixing the vector $\xi_I=\frac{1}{\sqrt{|I|}}\sum_{i\in I}e_i$. In general, however, there are many new twists and questions coming from noncommutativity. Importantly, this construction covers many interesting examples. See \cite{ban}.

One question left open in \cite{ban} was that of finding an abstract axiomatization of the algebraic manifolds $X\subset S^{N-1}_{\mathbb C,+}$ which can appear in this way. We will answer here this question, with a Tannakian characterization of such manifolds. We believe that some further improvements of this result can lead to an axiomatization of the ``easy algebraic manifolds'', which was the main question in \cite{ban}, and which remains open. 

The paper is organized as follows: 1 is a preliminary section, in 2-3 we state and prove the Tannakian duality result, and 4 contains a number of further results.

\section{Homogeneous spaces}

We use Woronowicz's quantum group formalism in \cite{wo1}, \cite{wo2}, with the extra axiom $S^2=id$. To be more precise, the definition that we will need is:

\begin{definition}
Assume that $A$ is a unital $C^*$-algebra, and that $u\in M_N(A)$ is a unitary matrix whose coefficients generate $A$, such that the following formulae define morphisms of $C^*$-algebras $\Delta:A\to A\otimes A,\varepsilon:A\to\mathbb C$ and $S:A\to A^{opp}$:
$$\Delta(u_{ij})=\sum_ku_{ik}\otimes u_{kj}\quad,\quad\varepsilon(u_{ij})=\delta_{ij}\quad,\quad S(u_{ij})=u_{ji}^*$$
We write then $A=C(G)$, and call $G$ a compact matrix quantum group.
\end{definition}

The above maps $\Delta,\varepsilon,S$ are called comultiplication, counit and antipode. The basic examples include the compact Lie groups $G\subset U_N$, their $q$-deformations at $q=-1$, and the duals of the finitely generated discrete groups $\Gamma=<g_1,\ldots,g_N>$. See \cite{wo1}.

We recall that the free unitary quantum group $U_N^+$, constructed by Wang in \cite{wan}, and the corresponding free complex sphere $S^{N-1}_{\mathbb C,+}$, from \cite{bgo}, are constructed as follows:
\begin{eqnarray*}
C(U_N^+)&=&C^*\left((u_{ij})_{i,j=1,\ldots,N}\Big|u^*=u^{-1},u^t=\bar{u}^{-1}\right)\\
C(S^{N-1}_{\mathbb C,+})&=&C^*\left(x_1,\ldots,x_N\Big|\sum_ix_ix_i^*=\sum_ix_i^*x_i=1\right)
\end{eqnarray*}

Here both the algebras on the right are by definition universal $C^*$-algebras.

Following \cite{ban}, we can now formulate the following definition:

\begin{definition}
An affine homogeneous space over a closed subgroup $G\subset U_N^+$ is a closed subset $X\subset S^{N-1}_{\mathbb C,+}$, such that there exists an index set $I\subset\{1,\ldots,N\}$ such that
$$\alpha(x_i)=\frac{1}{\sqrt{|I|}}\sum_{j\in I}u_{ij}\quad,\quad
\Phi(x_i)=\sum_ju_{ij}\otimes x_j$$
define morphisms of $C^*$-algebras, satisfying $(\int_G\otimes id)\Phi=\int_G\alpha(.)1$. 
\end{definition}

Here, and in what follows, a closed subspace $Y\subset Z$ corresponds by definition to a quotient map $C(Z)\to C(Y)$. As for $\int_G$, this is the Haar integration. See \cite{wo1}.

As a first observation, the coaction condition $(id\otimes\Phi)\Phi=(\Delta\otimes id)\Phi$ is satisfied, and we have as well $(id\otimes\alpha)\Phi=\Delta\alpha$. In the case where $\alpha$ is injective, we have:

\begin{proposition}
When $\alpha$ is injective we must have $X=X_{G,I}^{min}$, where:
$$C(X_{G,I}^{min})=\left<\frac{1}{\sqrt{|I|}}\sum_{j\in I}u_{ij}\Big|i=1,\ldots,N\right>\subset C(G)$$
Moreover, $X_{G,I}^{min}$ is affine homogeneous, for any $G\subset U_N^+$, and any $I\subset\{1,\ldots,N\}$.
\end{proposition}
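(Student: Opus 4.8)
My plan is to treat the two assertions separately, in both cases reducing everything to the structure of the comultiplication $\Delta$ and of the Haar integration on $C(G)$.

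For the first assertion, I would denote by $\pi\colon C(S^{N-1}_{\mathbb C,+})\to C(X)$ the quotient map defining $X$, so that the composition $\alpha\pi$ sends each generator $x_i$ to $\frac{1}{\sqrt{|I|}}\sum_{j\in I}u_{ij}$. The image of $\alpha\pi$ is by definition the subalgebra $C(X_{G,I}^{min})\subset C(G)$, so $\alpha\pi$ is a surjection $C(S^{N-1}_{\mathbb C,+})\to C(X_{G,I}^{min})$ realizing $X_{G,I}^{min}$ as a closed subspace of the sphere. The key point is then that $\ker(\alpha\pi)=\pi^{-1}(\ker\alpha)$, so the injectivity of $\alpha$ forces $\ker(\alpha\pi)=\ker\pi$; hence the two quotients of $C(S^{N-1}_{\mathbb C,+})$ coincide, and $X=X_{G,I}^{min}$.

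For the second assertion, write $y_i=\frac{1}{\sqrt{|I|}}\sum_{j\in I}u_{ij}\in C(G)$. I would first check that the $y_i$ satisfy the defining relations of the free sphere, namely $\sum_iy_iy_i^*=\sum_iy_i^*y_i=1$. Expanding and using the unitarity relations $u^*=u^{-1}$ and $u^t=\bar u^{-1}$ of $U_N^+$, the double sums over $I$ collapse via $\sum_iu_{ij}u_{ik}^*=\sum_iu_{ij}^*u_{ik}=\delta_{jk}$, leaving $\frac{1}{|I|}\cdot|I|=1$. By universality of $C(S^{N-1}_{\mathbb C,+})$ this produces the quotient map $x_i\mapsto y_i$, exhibiting $C(X_{G,I}^{min})$ as a sphere subspace, and the map $\alpha$ is then the inclusion into $C(G)$, which is automatically a morphism (and injective). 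To produce $\Phi$ and verify the integration condition, the decisive observation is that $\Phi$ is nothing but the restriction of $\Delta$: applying $\Delta$ to $y_i$ gives $\Delta(y_i)=\sum_ku_{ik}\otimes y_k$, which lies in $C(G)\otimes C(X_{G,I}^{min})$ and matches the required formula $\Phi(x_i)=\sum_ku_{ik}\otimes x_k$. Since the generators and their adjoints are sent into $C(G)\otimes C(X_{G,I}^{min})$, multiplicativity and continuity show that $\Delta$ maps the subalgebra $C(X_{G,I}^{min})$ into $C(G)\otimes C(X_{G,I}^{min})$, so $\Phi:=\Delta|_{C(X_{G,I}^{min})}$ is a well-defined morphism. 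Finally, under $\alpha=\mathrm{incl}$ and $\Phi=\Delta|$, the condition $(\int_G\otimes id)\Phi=\int_G\alpha(\,\cdot\,)1$ becomes the identity $(\int_G\otimes id)\Delta(a)=(\int_Ga)1$, which is exactly the left invariance of the Haar integration.

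The routine sphere-relation computation aside, the only genuinely structural step is the well-definedness of $\Phi$, namely that $\Delta$ preserves the subalgebra generated by the $y_i$; I expect this, together with correctly reading the integration axiom as Haar invariance, to be the crux, everything else being formal.
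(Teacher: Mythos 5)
Your proposal is correct and follows essentially the same route as the paper: the first assertion is the kernel comparison you describe (the paper dismisses it as ``clear from definitions''), and for the second the paper likewise computes $\Delta(z_i)=\sum_k u_{ik}\otimes z_k$, takes $\Phi=\Delta$ restricted to the generated subalgebra, and invokes the invariance of the Haar state for ergodicity. Your additional verification of the sphere relations $\sum_i y_iy_i^*=\sum_i y_i^*y_i=1$ is a detail the paper leaves implicit, and it checks out.
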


\begin{proof}
The first assertion is clear from definitions. Regarding now the second assertion, consider the variables $z_i=\frac{1}{\sqrt{|I|}}\sum_{j\in I}u_{ij}\in C(G)$. We have then:
$$\Delta(z_i)=\frac{1}{\sqrt{|I|}}\sum_{j\in I}\sum_ku_{ik}\otimes u_{kj}=\sum_ku_{ik}\otimes z_k$$

Thus we have a coaction map as in Definition 1.2, given by $\Phi=\Delta$, and the ergodicity condition, namely $(\int_G\otimes id)\Delta=\int_G(.)1$, holds as well, by definition of $\int_G$. See \cite{ban}.
\end{proof}

Given exponents $e_1,\ldots,e_k\in\{1,*\}$, consider the following quantities:
$$P_{i_1\ldots i_k,j_1\ldots j_k}=\int_Gu_{i_1j_1}^{e_1}\ldots u_{i_kj_k}^{e_k}$$

Once again following \cite{ban}, we have the following result:

\begin{proposition}
We must have $X\subset X_{G,I}^{max}$, as subsets of $S^{N-1}_{\mathbb C,+}$, where:
$$C(X_{G,I}^{max})=C(S^{N-1}_{\mathbb C,+})\Big/\left<(Px^{\otimes k})_{i_1\ldots i_k}=\frac{1}{\sqrt{|I|^k}}\sum_{j_1\ldots j_k\in I}P_{i_1\ldots i_k,j_1\ldots j_k}\big|\forall k,\forall i_1,\ldots i_k\right>$$
Moreover, $X_{G,I}^{max}$ is affine homogeneous, for any $G\subset U_N^+$, and any $I\subset\{1,\ldots,N\}$.
\end{proposition}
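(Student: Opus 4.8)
The plan is to handle the two assertions in turn: the inclusion is a direct unfolding of the affine homogeneity condition, while the second assertion requires building the structural maps directly on the universal algebra and checking that they descend to the quotient.

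For $X\subset X_{G,I}^{max}$, I would test the coaction on an arbitrary word. Fixing exponents $e_1,\ldots,e_k\in\{1,*\}$ and using that $\Phi$ is a $*$-homomorphism with $\Phi(x_i)=\sum_j u_{ij}\otimes x_j$, one gets
$$\Phi(x_{i_1}^{e_1}\cdots x_{i_k}^{e_k})=\sum_{j_1\ldots j_k}u_{i_1j_1}^{e_1}\cdots u_{i_kj_k}^{e_k}\otimes x_{j_1}^{e_1}\cdots x_{j_k}^{e_k}.$$
Applying $\int_G\otimes id$ turns the left-hand side into $(Px^{\otimes k})_{i_1\ldots i_k}$, whereas the affine homogeneity condition $(\int_G\otimes id)\Phi=\int_G\alpha(\cdot)1$ together with $\alpha(x_i)=\frac1{\sqrt{|I|}}\sum_{j\in I}u_{ij}$ turns it into $\frac1{\sqrt{|I|^k}}\sum_{j_1\ldots j_k\in I}P_{i_1\ldots i_k,j_1\ldots j_k}$. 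Hence the defining relations of $X_{G,I}^{max}$ already hold in $C(X)$, so the surjection $C(S^{N-1}_{\mathbb C,+})\to C(X)$ factors through $C(X_{G,I}^{max})$, which is the claim.

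For the ``moreover'' part I would introduce the candidates $\alpha(x_i)=\frac1{\sqrt{|I|}}\sum_{j\in I}u_{ij}$ and $\Phi(x_i)=\sum_j u_{ij}\otimes x_j$ on $C(S^{N-1}_{\mathbb C,+})$. That these respect the sphere relations is routine: both reduce to the unitarity identities $\sum_i u_{ij}u_{ij'}^*=\sum_i u_{ij}^*u_{ij'}=\delta_{jj'}$ coming from $u$ and $\bar u$ being unitary. The decisive tool is the self-reproducing identity
$$\sum_{l_1\ldots l_k}P_{i_1\ldots i_k,l_1\ldots l_k}\,u_{l_1m_1}^{e_1}\cdots u_{l_km_k}^{e_k}=P_{i_1\ldots i_k,m_1\ldots m_k},$$
which I would derive by applying the left invariance $(\int_G\otimes id)\Delta=\int_G(\cdot)1$ to the word $u_{i_1m_1}^{e_1}\cdots u_{i_km_k}^{e_k}$ and expanding $\Delta$ factorwise via $\Delta(u_{ij}^e)=\sum_l u_{il}^e\otimes u_{lj}^e$.

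With this identity in hand, the descent to the quotient is the crux. Feeding it into $\alpha$ of the left-hand side of a defining relation collapses $\sum_{j}P_{i_1\ldots i_k,j_1\ldots j_k}\alpha(x_{j_1})^{e_1}\cdots\alpha(x_{j_k})^{e_k}$ exactly to the scalar $\frac1{\sqrt{|I|^k}}\sum_{m\in I}P_{i_1\ldots i_k,m_1\ldots m_k}$, so $\alpha$ kills the relations and descends to $\alpha^{max}$. The same computation performed in $C(G)\otimes C(S^{N-1}_{\mathbb C,+})$ gives $\Phi((Px^{\otimes k})_{i_1\ldots i_k})=1\otimes(Px^{\otimes k})_{i_1\ldots i_k}$; composing with the quotient map $\pi$ shows that $(id\otimes\pi)\Phi$ sends each relation to zero, hence factors through a coaction $\Phi^{max}:C(X_{G,I}^{max})\to C(G)\otimes C(X_{G,I}^{max})$. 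Finally the affine homogeneity of $X_{G,I}^{max}$ is automatic: by the same computation as above, $(\int_G\otimes id)\Phi^{max}$ applied to a word is $(Px^{\otimes k})_{i_1\ldots i_k}$, which is precisely $\int_G\alpha^{max}(\cdot)1$ by the very relation defining the quotient. I expect the only real difficulty to be the multi-index bookkeeping in the self-reproducing identity and in verifying that the coaction passes to the quotient; once that is set up, the consistency condition comes for free.
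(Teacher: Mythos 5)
Your proposal is correct. For the inclusion $X\subset X_{G,I}^{max}$ you follow exactly the paper's route: evaluate the ergodicity condition $(\int_G\otimes id)\Phi=\int_G\alpha(\cdot)1$ on an arbitrary word $x_{i_1}^{e_1}\ldots x_{i_k}^{e_k}$, which produces precisely the defining relations of $X_{G,I}^{max}$, so the quotient map factors. For the second assertion the paper merely says it is ``standard'' and defers to \cite{ban}, whereas you actually supply the argument, and your version holds up: the identity $\sum_{l_1\ldots l_k}P_{i_1\ldots i_k,l_1\ldots l_k}u_{l_1m_1}^{e_1}\cdots u_{l_km_k}^{e_k}=P_{i_1\ldots i_k,m_1\ldots m_k}$ does follow from applying $(\int_G\otimes id)\Delta=\int_G(\cdot)1$ factorwise, it makes $\alpha$ collapse the left-hand side of each relation to the correct scalar, it gives $\Phi((Px^{\otimes k})_{i_1\ldots i_k})=1\otimes(Px^{\otimes k})_{i_1\ldots i_k}$ so that $(id\otimes\pi)\Phi$ kills the relation ideal, and ergodicity then holds by construction of the quotient. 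The check that $\alpha$ and $\Phi$ respect the sphere relations via unitarity of $u$ and $\bar u$ is also the right (and necessary) preliminary step. In short, your write-up is a legitimate expansion of what the paper leaves implicit, with no gaps.
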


\begin{proof}
The idea here is that the ergodicity condition $(\int_G\otimes id)\Phi=\int_G\alpha(.)1$ produces the relations in the statement. To be more precise, observe that we have:
\begin{eqnarray*}
&&\left(\int_G\otimes id\right)\Phi=\int\alpha(.)1\\
&\iff&\left(\int_G\otimes id\right)\Phi(x_{i_1}^{e_1}\ldots x_{i_k}^{e_k})=\frac{1}{\sqrt{|I|^k}}\int_G\alpha(x_{i_1}^{e_1}\ldots x_{i_k}^{e_k}),\forall k,\forall i_1,\ldots i_k\\
&\iff&\sum_{j_1\ldots j_k}P_{i_1\ldots i_k,j_1\ldots j_k}x_{j_1}^{e_1}\ldots x_{j_k}^{e_k}=\frac{1}{\sqrt{|I|^k}}\sum_{j_1\ldots j_k\in I}P_{i_1\ldots i_k,j_1\ldots j_k},\forall k,\forall i_1,\ldots i_k
\end{eqnarray*}

Thus we have $X\subset X_{G,I}^{max}$, and the last assertion is standard as well. See \cite{ban}.
\end{proof}

We will need one more general result from \cite{ban}, namely an extension of the Weingarten integration formula \cite{bco}, \cite{cma}, \cite{csn}, \cite{wei}, to the affine homogeneous space setting:

\begin{proposition}
Assuming that $G\to X$ is an affine homogeneous space, with index set $I\subset\{1,\ldots,N\}$, the Haar integration functional $\int_X=\int_G\alpha$ is given by
$$\int_Xx_{i_1}^{e_1}\ldots x_{i_k}^{e_k}=\sum_{\pi,\sigma\in D}(\xi_\pi)_{i_1\ldots i_k}K_I(\sigma)W_{kN}(\pi,\sigma)$$
where $\{\xi_\pi|\pi\in D\}$ is a basis of $Fix(u^{\otimes k})$, $W_{kN}=G_{kN}^{-1}$ with $G_{kN}(\pi,\sigma)=<\xi_\pi,\xi_\sigma>$ is the associated Weingarten matrix, and $K_I(\sigma)=\frac{1}{\sqrt{|I|^k}}\sum_{b_1\ldots b_k\in I}\overline{(\xi_\sigma)_{b_1\ldots b_k}}$.
\end{proposition}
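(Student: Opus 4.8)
The plan is to reduce everything to the usual Weingarten formula for the quantum group $G$ itself, via the defining relation $\int_X=\int_G\alpha$. First I would recall the standard integration formula over $G\subset U_N^+$: the operator $(\int_G\otimes\mathrm{id})u^{\otimes k}$ is the orthogonal projection onto $\mathrm{Fix}(u^{\otimes k})$, and expressing this projection in terms of the (not necessarily orthonormal) basis $\{\xi_\pi|\pi\in D\}$ produces
$$P_{i_1\ldots i_k,j_1\ldots j_k}=\int_Gu_{i_1j_1}^{e_1}\ldots u_{i_kj_k}^{e_k}=\sum_{\pi,\sigma\in D}(\xi_\pi)_{i_1\ldots i_k}\overline{(\xi_\sigma)_{j_1\ldots j_k}}W_{kN}(\pi,\sigma),$$
where $W_{kN}=G_{kN}^{-1}$ is the inverse of the Gram matrix $G_{kN}(\pi,\sigma)=\langle\xi_\pi,\xi_\sigma\rangle$. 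Here the coloring $(e_1,\ldots,e_k)$ is understood to dictate which tensor power $u^{\otimes k}$, and hence which fixed space and which index set $D$, are being used.

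Next I would apply $\alpha$ to a monomial. Since $\alpha$ is a $*$-homomorphism with $\alpha(x_i)=\frac{1}{\sqrt{|I|}}\sum_{j\in I}u_{ij}$, we obtain
$$\alpha(x_{i_1}^{e_1}\ldots x_{i_k}^{e_k})=\frac{1}{\sqrt{|I|^k}}\sum_{j_1,\ldots,j_k\in I}u_{i_1j_1}^{e_1}\ldots u_{i_kj_k}^{e_k}.$$
Applying $\int_G$, and using $\int_X=\int_G\alpha$ together with the formula above, then yields
$$\int_Xx_{i_1}^{e_1}\ldots x_{i_k}^{e_k}=\frac{1}{\sqrt{|I|^k}}\sum_{j_1,\ldots,j_k\in I}\sum_{\pi,\sigma\in D}(\xi_\pi)_{i_1\ldots i_k}\overline{(\xi_\sigma)_{j_1\ldots j_k}}W_{kN}(\pi,\sigma).$$

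Finally I would interchange the two sums and collect the $I$-dependent factor. The quantities $(\xi_\pi)_{i_1\ldots i_k}$ and $W_{kN}(\pi,\sigma)$ do not involve the $j$-indices, so the inner sum over $j_1,\ldots,j_k\in I$ acts only on $\overline{(\xi_\sigma)_{j_1\ldots j_k}}$, producing exactly $K_I(\sigma)=\frac{1}{\sqrt{|I|^k}}\sum_{b_1\ldots b_k\in I}\overline{(\xi_\sigma)_{b_1\ldots b_k}}$; this gives the claimed formula. The only genuine input is the standard Weingarten formula for $\int_G$, so the main obstacle is really bookkeeping: one must ensure that $\mathrm{Fix}(u^{\otimes k})$, the basis $\{\xi_\pi\}$, and the set $D$ are all taken with respect to the correct coloring $(e_1,\ldots,e_k)$, and that the $*$-structure is handled consistently, so that $\alpha(x_i^*)=\frac{1}{\sqrt{|I|}}\sum_{j\in I}u_{ij}^*$ feeds correctly into the colored tensor power. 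Once the conventions are fixed, the computation is a direct substitution, as indicated in \cite{ban}.
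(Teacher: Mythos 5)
Your proposal is correct and follows essentially the same route as the paper: apply $\alpha$ to the monomial, use the Weingarten formula for $\int_G$, and absorb the sum over $I$-indices into $K_I(\sigma)$. The extra remarks on keeping the coloring and the $*$-structure consistent are sensible bookkeeping but do not change the argument.
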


\begin{proof}
By using the Weingarten formula for the quantum group $G$, we have:
\begin{eqnarray*}
\int_Xx_{i_1}^{e_1}\ldots x_{i_k}^{e_k}
&=&\frac{1}{\sqrt{|I|^k}}\sum_{b_1\ldots b_k\in I}\int_Gu_{i_1b_1}^{e_1}\ldots u_{i_kb_k}^{e_k}\\
&=&\frac{1}{\sqrt{|I|^k}}\sum_{b_1\ldots b_k\in I}\sum_{\pi,\sigma\in D}(\xi_\pi)_{i_1\ldots i_k}\overline{(\xi_\sigma)_{b_1\ldots b_k}}W_{kN}(\pi,\sigma)
\end{eqnarray*}

But this gives the formula in the statement, and we are done. See \cite{ban}.
\end{proof}

Finally, here is a natural example of an intermediate space $X_{G,I}^{min}\subset X\subset X_{G,I}^{max}$:

\begin{proposition}
Given a closed quantum subgroup $G\subset U_N^+$, and a set $I\subset\{1,\ldots,N\}$, if we consider the following quotient algebra
$$C(X_{G,I}^{med})=C(S^{N-1}_{\mathbb C,+})\Big/\left<\sum_{a_1\ldots a_k}\xi_{a_1\ldots a_k}x_{a_1}^{e_1}\ldots x_{a_k}^{e_k}=\frac{1}{\sqrt{|I|^k}}\sum_{b_1\ldots b_k\in I}\xi_{b_1\ldots b_k}\Big|\forall k,\forall\xi\in Fix(u^{\otimes k})\right>$$
we obtain in this way an affine homogeneous space $G\to X_{G,I}$.
\end{proposition}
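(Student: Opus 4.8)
The plan is to verify the three requirements of Definition 1.2 for the quotient algebra $C(X_{G,I}^{med})$: that the formulae for $\alpha$ and $\Phi$ descend to morphisms out of $C(X_{G,I}^{med})$, and that the ergodicity condition $(\int_G\otimes id)\Phi=\int_G\alpha(.)1$ holds. The computational engine behind all of this is a single fixed vector identity, which I would isolate at the outset. A vector $\xi\in Fix(u^{\otimes k})$ is the same as an intertwiner $\xi\colon1\to u^{\otimes k}$, and since $u$ is biunitary the adjoint $\xi^*\colon u^{\otimes k}\to1$ is again an intertwiner; in coordinates this reads
$$\sum_{a_1\ldots a_k}\overline{\xi_{a_1\ldots a_k}}\,u_{a_1j_1}^{e_1}\ldots u_{a_kj_k}^{e_k}=\overline{\xi_{j_1\ldots j_k}}.$$
The relevance is that both $\alpha$ and $\Phi$ send the index of $x_i$ onto the \emph{first} leg of $u$, so that applying either map to a defining relation produces precisely the left-hand side above; the identity then folds this back into the coefficients of $\xi$.

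Granting this, the verification that $\Phi(x_i)=\sum_ju_{ij}\otimes x_j$ descends is immediate: applying the multiplicative map $\Phi$ to a defining relation gives
$$\Phi\Big(\sum_a\xi_ax_{a_1}^{e_1}\ldots x_{a_k}^{e_k}\Big)=\sum_{j_1\ldots j_k}\Big(\sum_a\xi_au_{a_1j_1}^{e_1}\ldots u_{a_kj_k}^{e_k}\Big)\otimes x_{j_1}^{e_1}\ldots x_{j_k}^{e_k},$$
whose inner sums fold back into the coefficients of $\xi$ by the identity above, leaving a term of the form $1\otimes\sum_j\xi_jx_{j_1}^{e_1}\ldots x_{j_k}^{e_k}$, which by the very relation defining $C(X_{G,I}^{med})$ is the scalar $\frac{1}{\sqrt{|I|^k}}\sum_{b\in I}\xi_b$ times $1\otimes1$, as required. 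Running the same computation with $\int_G$ in place of the right tensor leg, i.e. for $\alpha(x_i)=\frac{1}{\sqrt{|I|}}\sum_{j\in I}u_{ij}$, shows that $\alpha$ descends as well; equivalently, the identity is exactly the statement that the defining relations survive the substitution $x_i\mapsto z_i$, so that $X_{G,I}^{min}\subset X_{G,I}^{med}$ and $\alpha$ is the composition $C(X_{G,I}^{med})\to C(X_{G,I}^{min})\subset C(G)$.

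For ergodicity I would evaluate both sides of $(\int_G\otimes id)\Phi=\int_G\alpha(.)1$ on a word $x_{i_1}^{e_1}\ldots x_{i_k}^{e_k}$. The left-hand side is $\sum_{j_1\ldots j_k}P_{i_1\ldots i_k,j_1\ldots j_k}x_{j_1}^{e_1}\ldots x_{j_k}^{e_k}=(Px^{\otimes k})_{i_1\ldots i_k}$, where $P=\int_Gu^{\otimes k}$ is the orthogonal projection onto $Fix(u^{\otimes k})$, while the right-hand side is $\frac{1}{\sqrt{|I|^k}}\sum_{j_1\ldots j_k\in I}P_{i_1\ldots i_k,j_1\ldots j_k}$. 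Thus ergodicity is exactly the family of relations defining $X_{G,I}^{max}$ in Proposition 1.4, and these hold in $C(X_{G,I}^{med})$: expanding $P=\sum_{\pi,\sigma\in D}\xi_\pi W_{kN}(\pi,\sigma)\xi_\sigma^*$ over a basis of $Fix(u^{\otimes k})$ rewrites each $X_{G,I}^{max}$ relation as a $W_{kN}$-weighted combination of the defining relations of $X_{G,I}^{med}$. In other words this step is the inclusion $X_{G,I}^{med}\subset X_{G,I}^{max}$, and together with the coaction identities recorded after Definition 1.2 it finishes the proof that $G\to X_{G,I}^{med}$ is affine homogeneous.

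The one genuinely delicate point I expect is the fixed vector identity, and more precisely keeping the bookkeeping straight: which leg of $u^{\otimes k}$ carries the summation, and where the complex conjugates sit when passing between $\xi$, $\overline\xi$ and the colorings $e_1\ldots e_k$ (this is where biunitarity of $u$, equivalently the standing assumption $S^2=id$, enters). Once the identity is stated in the correct form, the three verifications are short and run in close parallel to the proofs of Propositions 1.3 and 1.4. A secondary point to check is that these manipulations, which take place a priori in the universal $*$-algebra generated by the relations, pass to the $C^*$-completions, so that $\Phi$ genuinely lands in $C(G)\otimes C(X_{G,I}^{med})$.
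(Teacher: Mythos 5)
Your proposal is correct, and where it overlaps with the paper's written proof it takes the same route: the paper's entire argument consists of expanding the $X_{G,I}^{max}$ relations $Px^{\otimes k}=P^I$ via the Weingarten formula, with $P$ written over a basis $\{\xi_\pi\}$ of $Fix(u^{\otimes k})$, concluding $X_{G,I}^{med}\subset X_{G,I}^{max}$, and then declaring the remaining assertions ``standard'' with a pointer to \cite{ban}. That Weingarten step is exactly your ergodicity paragraph (via Proposition 1.4, ergodicity for $X_{G,I}^{med}$ is the inclusion into $X_{G,I}^{max}$). What you add --- the explicit verification that $\alpha$ and $\Phi$ descend to the quotient --- is precisely the part the paper outsources, and your mechanism (the intertwining property of a fixed vector folds the $u$-leg back into the coefficients of $\xi$) is the right one. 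On the one point you flag as delicate: the identity you actually need for $\Phi$ and $\alpha$ is $\sum_{a_1\ldots a_k}\xi_{a_1\ldots a_k}u_{a_1j_1}^{e_1}\cdots u_{a_kj_k}^{e_k}=\xi_{j_1\ldots j_k}$, with no conjugates, since $\Phi$ and $\alpha$ produce un-conjugated sums over the first leg; the conjugated identity you derive from $\xi^*$ being an intertwiner does not literally apply to them. This resolves once one notes the convention the paper is actually using for $Fix(u^{\otimes k})$, made explicit only in the proof of Theorem 4.3, namely the transpose (row-vector) fixed-point condition displayed above --- under that reading your folding identity is definitional, while the conjugated version you state is its adjoint and is instead what matches the coefficients $\overline{(\xi_\sigma)_{b_1\ldots b_k}}$ in the Weingarten expansion. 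So the bookkeeping closes, but only after pinning down which of the two conventions is in force; as written, the step ``the inner sums fold back by the identity above'' has a conjugate mismatch that needs this one extra sentence to repair.
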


\begin{proof}
We know from Proposition 1.4 above that $X_{G,I}^{max}\subset S^{N-1}_{\mathbb C,+}$ is constructed by imposing to the standard coordinates the conditions $Px^{\otimes k}=P^I$, where:
$$P_{i_1\ldots i_k,j_1\ldots j_k}=\int_Gu_{i_1j_1}^{e_1}\ldots u_{i_kj_k}^{e_k}\quad,\quad
P^I_{i_1\ldots i_k}=\frac{1}{\sqrt{|I|^k}}\sum_{j_1\ldots j_k\in I}P_{i_1\ldots i_k,j_1\ldots j_k}$$

According to the Weingarten integration formula for $G$, we have:
\begin{eqnarray*}
(Px^{\otimes k})_{i_1\ldots i_k}&=&\sum_{a_1\ldots a_k}\sum_{\pi,\sigma\in D}(\xi_\pi)_{i_1\ldots i_k}\overline{(\xi_\sigma)_{a_1\ldots a_k}}W_{kN}(\pi,\sigma)x_{a_1}^{e_1}\ldots x_{a_k}^{e_k}\\
P^I_{i_1\ldots i_k}&=&\frac{1}{\sqrt{|I|^k}}\sum_{b_1\ldots b_k\in I}\sum_{\pi,\sigma\in D}(\xi_\pi)_{i_1\ldots i_k}\overline{(\xi_\sigma)_{b_1\ldots b_k}}W_{kN}(\pi,\sigma)
\end{eqnarray*}

Thus $X_{G,I}^{med}\subset X_{G,I}^{max}$, and the other assertions are standard as well. See \cite{ban}.
\end{proof}

We can now put everything together, as follows:

\begin{theorem}
Given a closed subgroup $G\subset U_N^+$, and a subset $I\subset\{1,\ldots,N\}$, the affine homogeneous spaces over $G$, with index set $I$, have the following properties:
\begin{enumerate}
\item These are exactly the intermediate subspaces $X_{G,I}^{min}\subset X\subset X_{G,I}^{max}$ on which $G$ acts affinely, with the action being ergodic.

\item For the minimal and maximal spaces $X_{G,I}^{min}$ and $X_{G,I}^{max}$, as well as for the intermediate space $X_{G,I}^{med}$ constructed above, these conditions are satisfied.

\item By performing the GNS construction with respect to the Haar integration functional $\int_X=\int_G\alpha$ we obtain the minimal space $X_{G,I}^{min}$.
\end{enumerate}
We agree to identify all these spaces, via the GNS construction, and denote them $X_{G,I}$.
\end{theorem}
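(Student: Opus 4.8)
The plan is to assemble the three statements from the propositions already proved, the only genuinely new content being the GNS identification in (3).

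For (1) I would argue both inclusions together with a reformulation of Definition 1.2. Given an affine homogeneous space $X$, Proposition 1.4 gives $X\subset X_{G,I}^{max}$, while the inclusion $X_{G,I}^{min}\subset X$ is seen by observing that $\alpha$ has image exactly $C(X_{G,I}^{min})=\langle\frac{1}{\sqrt{|I|}}\sum_{j\in I}u_{ij}\rangle$, so that $\alpha$ factors as a quotient $C(X)\to C(X_{G,I}^{min})$ followed by the inclusion into $C(G)$; affineness of the action ($\Phi$ a morphism) and ergodicity are then part of Definition 1.2. Conversely, for any intermediate space $X_{G,I}^{min}\subset X\subset X_{G,I}^{max}$ the map $\alpha$ is automatically defined as the composition $C(X)\to C(X_{G,I}^{min})\hookrightarrow C(G)$ sending $x_i\mapsto\frac{1}{\sqrt{|I|}}\sum_{j\in I}u_{ij}$, so that requiring in addition the coaction $\Phi$ to exist (affineness) and to satisfy $(\int_G\otimes id)\Phi=\int_G\alpha(.)1$ (ergodicity) is precisely Definition 1.2.

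For (2) the three cases are immediate from the material above: Proposition 1.3 for $X_{G,I}^{min}$, Proposition 1.4 for $X_{G,I}^{max}$, and Proposition 1.6 for $X_{G,I}^{med}$; the chain $X_{G,I}^{min}\subset X_{G,I}^{med}\subset X_{G,I}^{max}$ then follows by combining Proposition 1.6 with the intermediacy established in part (1). The substance lies in (3). The key observation is that $\alpha$ intertwines the two Haar functionals, $\int_X=\int_G\alpha$, and, being a $*$-morphism, is isometric for the associated $L^2$-norms: $\int_X(a^*b)=\int_G(\alpha(a)^*\alpha(b))$ for $a,b\in C(X)$. Hence $\alpha$ descends to an isometry $L^2(X)\to L^2(G)$ whose image is the $\int_G$-closure of $\alpha(C(X))=C(X_{G,I}^{min})$, namely $L^2(X_{G,I}^{min})$. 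I would then check that under this identification the GNS representation of $C(X)$ acts by left multiplication by $\alpha(a)$, so that it factors through the quotient $C(X)\to C(X_{G,I}^{min})$ and coincides with the GNS representation of $C(X_{G,I}^{min})$, producing the minimal space uniformly in $X$. Equivalently, one may invoke the Weingarten formula of Proposition 1.5: its right-hand side depends only on $(G,I)$ through $D$, the vectors $\xi_\pi$, the matrix $W_{kN}$ and the factor $K_I$, so the moments $\int_X x_{i_1}^{e_1}\cdots x_{i_k}^{e_k}$ are independent of $X$, whence all these functionals and all their GNS outputs agree.

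The main obstacle I expect is the careful bookkeeping in (3): one must verify that the null space of $\int_X$ is exactly $\ker\big(C(X)\to C(X_{G,I}^{min})\big)$ and address the full-versus-reduced distinction, i.e. that applying the GNS construction to the minimal space reproduces the minimal space up to the standard identification of an algebra with its reduced image. This is precisely what legitimizes the concluding sentence identifying all the spaces $X$ and denoting the common object $X_{G,I}$.
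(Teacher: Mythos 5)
Your proposal is correct and follows essentially the same route as the paper, whose own proof is a one-line assembly of Propositions 1.3, 1.4 and 1.6 with all details deferred to \cite{ban}. Your part (3), identifying $L^2(X)$ with $L^2(X_{G,I}^{min})$ via the isometry induced by $\alpha$ (or equivalently noting that the Weingarten formula of Proposition 1.5 makes the moments depend only on $(G,I)$), together with the explicit caveat about the full-versus-reduced identification, supplies precisely the content the paper leaves implicit.
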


\begin{proof}
This follows indeed by combining the various results and observations formulated above. Once again, for full details on all these facts, we refer to \cite{ban}.
\end{proof}

Observe the similarity with what happens for the $C^*$-algebras of the discrete groups, where the various intermediate algebras $C^*(\Gamma)\to A\to C_{red}^*(\Gamma)$ must be identified as well, in order to reach to a unique noncommutative space $\widehat{\Gamma}$. For details here, see \cite{wo1}.

Regarding now the basic examples of such spaces, we have:

\begin{proposition}
Given $N\in\mathbb N$ and $I\subset\{1,\ldots,N\}$, the following hold:
\begin{enumerate}
\item In the classical case, $G\subset U_N$, we have $X_{G,I}=G/(G\cap C_N^I)$, where $C_N^I\subset U_N$ is the group of unitaries fixing the vector $\xi_I=\frac{1}{\sqrt{|I|}}(\delta_{i\in I})_i$.

\item In the group dual case, $G=\widehat{\Gamma}\subset U_N^+$  with $\Gamma=<g_1,\ldots,g_N>$, embedded via $u_{ij}=\delta_{ij}g_i$, we have $X_{G,I}=\widehat{\Gamma}_I$, with $\Gamma_I=<g_i|i\in I>\subset\Gamma$.
\end{enumerate}
\end{proposition}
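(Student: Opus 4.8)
The plan is to reduce both statements to the minimal model. By Theorem 1.7 we may identify $X_{G,I}$ with $X_{G,I}^{min}$, so that, writing $x_i=\frac{1}{\sqrt{|I|}}\sum_{j\in I}u_{ij}$, we have $C(X_{G,I})=<x_i\,|\,i=1,\ldots,N>\subset C(G)$. Both assertions then reduce to computing these generators explicitly and identifying the resulting $C^*$-subalgebra of $C(G)$.

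For (1), I would use that in the classical case $C(G)$ is commutative, with $u_{ij}$ the coordinate functions $u_{ij}(g)=g_{ij}$. Then $x_i(g)=\frac{1}{\sqrt{|I|}}\sum_{j\in I}g_{ij}=(g\xi_I)_i$, so the joint evaluation $g\mapsto(x_i(g))_i=g\xi_I$ is exactly the orbit map of $\xi_I$, with image the orbit $G\xi_I\subset S^{N-1}_{\mathbb C}$. This orbit is compact, hence closed, and the $*$-algebra generated by the $x_i$ separates its points, two elements of $G\xi_I$ with the same coordinates being equal as vectors. Stone--Weierstrass would then give $C(X_{G,I})=C(G\xi_I)$, and the orbit--stabilizer theorem a homeomorphism $G\xi_I\simeq G/\mathrm{Stab}_G(\xi_I)$. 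Since $\mathrm{Stab}_G(\xi_I)=G\cap C_N^I$ by definition of $C_N^I$, this yields $X_{G,I}=G/(G\cap C_N^I)$.

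For (2), I would simply compute the generators from the embedding $u_{ij}=\delta_{ij}g_i$: here $\sum_{j\in I}u_{ij}=g_i$ for $i\in I$ and $0$ otherwise, so that $x_i=\frac{1}{\sqrt{|I|}}g_i$ for $i\in I$ and $x_i=0$ for $i\notin I$. The nonzero generators are thus, up to scalars, the group elements $g_i$ with $i\in I$; one checks the sphere relations $\sum_ix_ix_i^*=\sum_ix_i^*x_i=\frac{1}{|I|}\sum_{i\in I}1=1$, and these generate inside $C^*(\Gamma)$ the subalgebra attached to $\Gamma_I=<g_i\,|\,i\in I>$. Hence $C(X_{G,I})=C^*(\Gamma_I)$ and $X_{G,I}=\widehat{\Gamma_I}$, after the full--reduced identification provided by the GNS construction of Theorem 1.7(3).

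The computations are routine; the only delicate points are the two $C^*$-algebraic identifications. In (1) the substantive input is the appeal to compactness and Stone--Weierstrass to pass from the algebra generated by the coordinates to all of $C(G\xi_I)$ --- this is where the main work of the argument lies. In (2) the only subtlety is identifying the subalgebra of $C^*(\Gamma)$ generated by $\{g_i\}_{i\in I}$ with $C^*(\Gamma_I)$, which is exactly what the GNS identification in Theorem 1.7 absorbs, so no real obstacle remains.
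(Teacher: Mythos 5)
Your argument is correct and follows essentially the same route as the paper, which merely sketches (1) via transitivity of the action plus the stabilizer computation and (2) via the direct substitution $u_{ij}=\delta_{ij}g_i$, deferring details to \cite{ban}. Your write-up fills in the standard details (orbit map, Stone--Weierstrass, orbit--stabilizer for (1); the explicit computation of the generators and the GNS identification for (2)) without changing the underlying idea.
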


\begin{proof}
In this statement (1) follows from the fact that the action $G\curvearrowright X_{G,I}$ can be shown to be transitive, and the stabilizer of $\xi_I$ is the group $G\cap C_N^I$ in the statement. As for (2), this follows directly from Definition 1.2, by using $u_{ij}=\delta_{ij}g_i$. See \cite{ban}.
\end{proof}

One interesting question is that of understanding how much of (1) can apply to the general case. The answer here is as follows, with (2) providing counterexamples:

\begin{proposition}
We have a quotient map as follows, which is in general proper,
$$G/(G\cap C_N^{I+})\to X_{G,I}$$
where $C_N^{I+}\subset U_N^+$ is the subgroup defined by $C(C_N^{I+})=C(U_N^+)/<u\xi_I=\xi_I>$, with the relation $u\xi_I=\xi_I$ being interpreted as an equality of column vectors, over $C(U_N^+)$.
\end{proposition}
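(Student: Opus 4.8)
The plan is to construct the map at the level of function algebras, and then to exhibit counterexamples to its being an isomorphism using the group dual case, statement (2) of Proposition 1.8. For the map, recall from Proposition 1.3 and Theorem 1.7 that, after the GNS construction, $C(X_{G,I})=C(X_{G,I}^{min})=\langle z_i|i=1,\ldots,N\rangle\subset C(G)$, where $z_i=\frac{1}{\sqrt{|I|}}\sum_{j\in I}u_{ij}=(u\xi_I)_i$. Let $\pi:C(G)\to C(G\cap C_N^{I+})$ denote the canonical surjection. By the very definition of $C_N^{I+}$, the relation $u\xi_I=\xi_I$ holds in the target, so $\pi(z_i)=(\xi_I)_i$ is a scalar. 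Using $\Delta(z_i)=\sum_k u_{ik}\otimes z_k$ from Proposition 1.3, I would then compute the right coaction of $G\cap C_N^{I+}$ on the generators:
$$(id\otimes\pi)\Delta(z_i)=\sum_k u_{ik}\otimes\pi(z_k)=\sum_k u_{ik}(\xi_I)_k\otimes 1=z_i\otimes 1.$$
Thus each $z_i$ lies in the invariant subalgebra $C(G/(G\cap C_N^{I+}))\subset C(G)$, whence $C(X_{G,I})\subset C(G/(G\cap C_N^{I+}))$; dualizing this inclusion of $C^*$-algebras gives the asserted quotient map $G/(G\cap C_N^{I+})\to X_{G,I}$.

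To see that the map is in general proper, I would specialize to $G=\widehat{\Gamma}$ with $u_{ij}=\delta_{ij}g_i$. Here the defining relation of $C_N^{I+}$, which reads $\sum_{j\in I}u_{ij}=\delta_{i\in I}$, becomes $g_i=1$ for each $i\in I$, so that $G\cap C_N^{I+}=\widehat{\Gamma/N_I}$, where $N_I\subset\Gamma$ is the normal closure of $\{g_i|i\in I\}$. Writing $\rho:C^*(\Gamma)\to C^*(\Gamma/N_I)$ for the resulting quotient and using $\Delta(g)=g\otimes g$, a group element $g\in\Gamma$ satisfies $(id\otimes\rho)\Delta(g)=g\otimes 1$ exactly when $g\in N_I$; hence $C(G/(G\cap C_N^{I+}))=C^*(N_I)$. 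Since $X_{G,I}=\widehat{\Gamma_I}$ with $\Gamma_I=\langle g_i|i\in I\rangle$ by Proposition 1.8(2), the quotient map is precisely the one dual to the subgroup inclusion $\Gamma_I\subset N_I$, and it is an isomorphism iff $\Gamma_I=N_I$, i.e. iff $\Gamma_I$ is normal in $\Gamma$. Taking for instance $\Gamma$ free on $g_1,g_2$ with $I=\{1\}$, the subgroup $\Gamma_I=\langle g_1\rangle$ is not normal, so the map is proper, which proves the claim.

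The step I expect to be the main obstacle is the group dual identification of $G/(G\cap C_N^{I+})$: one must carefully separate the subgroup $\Gamma_I$ generated by the $g_i$ with $i\in I$ from its normal closure $N_I$, and verify that imposing the relations of $C_N^{I+}$ collapses $\Gamma$ by the full $N_I$ rather than merely by $\Gamma_I$. The remaining ingredients are routine: that $C_N^{I+}$ and the intersection $G\cap C_N^{I+}$ are genuine quantum subgroups (a standard coideal check on the relation $u\xi_I=\xi_I$), and that the invariant-subalgebra descriptions of the quotient spaces are the standard ones for homogeneous quotients.
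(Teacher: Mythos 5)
Your proof is correct and takes essentially the same route as the paper's: the identical computation $(id\otimes\pi)\Delta z_i=\sum_ku_{ik}\otimes(\xi_I)_k=z_i\otimes 1$ placing the generators of $C(X_{G,I}^{min})$ in the invariant subalgebra, and the same group-dual mechanism for properness, namely the gap between $\Gamma_I$ and its normal closure $\Gamma_I'$. The only difference is that you flesh out the free-group counterexample explicitly, whereas the paper merely records the quotient map as $\widehat{\Gamma_I'}\to\widehat{\Gamma_I}$ and asserts properness.
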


\begin{proof}
Observe first that $C_N^{I+}$ is indeed a quantum group. In fact, it is standard to exhibit an isomorphism $C_N^{+I}\simeq U_{N-1}^+$, by reasoning as in \cite{rau}. We must check that the defining relations for $C(G/(G\cap C_N^{I+}))$ hold for the standard generators $x_i\in C(X_{G,I})$. But if we denote by $\pi:C(G)\to C(G\cap C_N^{I+})$ the quotient map, we have, as desired:
$$(id\otimes\pi)\Delta x_i=(id\otimes\pi)\left(\frac{1}{\sqrt{|I|}}\sum_{j\in I}\sum_ku_{ik}\otimes u_{kj}\right)=\sum_ku_{ik}\otimes(\xi_I)_k=x_i\otimes1$$

Finally, for the group duals this quotient map is given by $\widehat{\Gamma}_I'\to\widehat{\Gamma}_I$, where $\Gamma_I'\subset\Gamma$ is the normal closure of $\Gamma_I$, and so this map can be indeed proper. See \cite{ban}.
\end{proof}

\section{Algebraic manifolds}

We discuss in what follows the axiomatization of the affine homogeneous spaces, as algebraic submanifolds of the free sphere $S^{N-1}_{\mathbb C,+}$. We use the following formalism:

\begin{definition}
A closed subset $X\subset S^{N-1}_{\mathbb C,+}$ is called algebraic when
$$C(X)=C(S^{N-1}_{\mathbb C,+})\Big/\Big<P_i(x_1,\ldots,x_N)=0,\forall i\in I\Big>$$
for a certain family of noncommutative $*$-polynomials $P_i\in\mathbb C<x_1,\ldots,x_N>$.
\end{definition}

There are many examples of such manifolds, as for instance all the compact matrix quantum groups. Indeed, assuming that we have a closed subgroup $G\subset U_N^+$, by rescaling the standard coordinates we obtain an embedding $G\subset U_N^+\subset S^{N^2-1}_{\mathbb C,+}$, and the following result, coming from \cite{wo2}, shows that we have indeed an algebraic manifold:

\begin{proposition}
Given a closed subgroup $G\subset U_N^+$, with the corresponding fundamental corepresentations denoted $u\to v$, we have the formula
$$C(G)=C(U_N^+)\Big/\Big(T\in Hom(u^{\otimes k},u^{\otimes l}),\forall k,l,\forall T\in Hom(v^{\otimes k},v^{\otimes l})\Big)$$
with $k,l=\ldots\circ\bullet\bullet\circ\bullet\ldots$ being colored integers, with the tensor power conventions $w^\circ=w,w^\bullet=\bar{w},w^{xy}=w^x\otimes w^y$, and with the notation $Hom(r,p)=\{T|Tr=pT\}$.
\end{proposition}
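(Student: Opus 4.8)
The plan is to show that the algebra on the right-hand side, which I denote $C(\tilde G)$, coincides with $C(G)$. First I would verify that $C(\tilde G)$ really is a Woronowicz algebra: each relation $Tu^{\otimes k}=u^{\otimes l}T$ is preserved by $\Delta,\varepsilon,S$ on $C(U_N^+)$, since an intertwiner between tensor powers of $u$ stays an intertwiner after applying the comultiplication, counit and antipode. Thus the relations generate a Hopf ideal, and $\tilde G\subset U_N^+$ is a genuine closed quantum subgroup, with fundamental corepresentation $u$.

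Next comes the easy inclusion. Writing $\pi:C(U_N^+)\to C(G)$ for the quotient map $u\to v$, every defining relation of $C(\tilde G)$, namely $Tu^{\otimes k}=u^{\otimes l}T$ for $T\in Hom(v^{\otimes k},v^{\otimes l})$, is sent by $\pi$ to $Tv^{\otimes k}=v^{\otimes l}T$, which holds by the very definition of $Hom(v^{\otimes k},v^{\otimes l})$. Hence $\pi$ factors through $C(\tilde G)$, yielding a surjection $C(\tilde G)\to C(G)$, that is, $G\subset\tilde G$ as subgroups of $U_N^+$.

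The key observation is then that the two intertwiner categories coincide. By construction $Hom_G(v^{\otimes k},v^{\otimes l})\subset Hom_{\tilde G}(u^{\otimes k},u^{\otimes l})$, while the inclusion $G\subset\tilde G$ forces the reverse inclusion $Hom_{\tilde G}(u^{\otimes k},u^{\otimes l})\subset Hom_G(v^{\otimes k},v^{\otimes l})$, since a smaller quantum group has more intertwiners. Sandwiching these gives $Hom_{\tilde G}(u^{\otimes k},u^{\otimes l})=Hom_G(v^{\otimes k},v^{\otimes l})$ for all colored $k,l$; in particular the fixed spaces $Fix(u^{\otimes k})$ of the two groups agree.

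Finally I would upgrade this equality of categories to the equality $\tilde G=G$, and this last step is the substance of Tannakian duality and the main obstacle. My approach is via the Weingarten formula from Proposition 1.4: the Haar functional of a closed subgroup of $U_N^+$ is completely determined, on the dense $*$-algebra of coefficients, by the spaces $Fix(u^{\otimes k})$. Since these agree for $\tilde G$ and $G$, the surjection $\Phi:C(\tilde G)\to C(G)$ satisfies $\int_{\tilde G}=(\int_G)\circ\Phi$ on smooth elements. Using faithfulness of the Haar state on the dense Hopf $*$-algebra, any smooth $a$ with $\Phi(a)=0$ obeys $\int_{\tilde G}(a^*a)=\int_G(\Phi(a)^*\Phi(a))=0$, whence $a=0$; so $\Phi$ is injective on smooth parts, hence an isomorphism of the dense Hopf $*$-algebras, and therefore an isomorphism of their universal $C^*$-completions $C(\tilde G)$ and $C(G)$. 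The delicate points to watch are the faithfulness of the Haar state on the smooth algebra, used to detect the kernel, and the matching of $C^*$-norms, the latter being automatic here since both algebras are quotients of the universal algebra $C(U_N^+)$.
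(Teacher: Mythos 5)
Your argument is correct in substance, but it takes a genuinely different route from the paper. The paper disposes of the hard direction in one line, by constructing the surjection $A\to C(G)$ and then invoking Woronowicz's Tannaka--Krein theorem from \cite{wo2} (with \cite{mal} cited for a short proof). You instead give a self-contained argument that only uses Peter--Weyl theory from \cite{wo1}: after checking that the relations form a Hopf ideal and that the intertwiner spaces of $\tilde G$ and $G$ coincide (both elementary, and correctly argued -- the inclusion $Hom_{\tilde G}\subset Hom_G$ because the quotient map transports intertwining relations, the reverse by construction), you use the fact that $(id\otimes\int)(w)$ is the orthogonal projection onto $Fix(w)$ to conclude that $\int_{\tilde G}=\int_G\circ\Phi$ on matrix coefficients, and then faithfulness of the Haar state on the dense Hopf $*$-algebra to get injectivity. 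This ``Haar state trick'' is a known and legitimate way to deduce $\tilde G=G$ from equality of fixed-point spaces, and it buys a more elementary proof at the cost of being slightly longer. The one imprecise point is your closing claim that the matching of $C^*$-norms is ``automatic since both algebras are quotients of $C(U_N^+)$'': a quotient of a universal $C^*$-algebra need not carry the universal norm for its own presentation (the reduced group $C^*$-algebras sitting inside $U_N^+$ are counterexamples), so your argument really identifies the dense Hopf $*$-algebras, and the identification at the $C^*$-level requires either working with full versions throughout or the GNS-identification convention that the paper itself adopts in Theorem 1.7. This is a caveat the paper's formulation shares, so it is a point to acknowledge rather than a gap that invalidates the proof.
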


\begin{proof}
For any choice of two colored integers $k,l$ and of an intertwiner $T\in Hom(v^{\otimes k},v^{\otimes l})$, the formula $T\in Hom(u^{\otimes k},u^{\otimes l})$, with $u=(u_{ij})$ being the fundamental corepresentation of $C(U_N^+)$, corresponds to a collection of $N^{k+l}$ relations between the variables $u_{ij}$. By dividing now $C(U_N^+)$ by the ideal generated by all these relations, when $k,l$ and $T$ vary, we obtain a certain algebra $A$, which is the algebra on the right in the statement.

It is clear that we have a surjective morphism $A\to C(G)$, and by using Woronowicz's Tannakian results in \cite{wo2}, this surjective morphism follows  to be an isomorphism. For a short, recent proof of this fact, using basic Hopf algebra theory, see \cite{mal}.
\end{proof}

In relation now with the affine homogeneous spaces, we have:

\begin{proposition}
Any affine homogeneous space $X_{G,I}\subset S^{N-1}_{\mathbb C,+}$ is algebraic, with
$$\sum_{i_1\ldots i_k}\xi_{i_1\ldots i_k}x_{i_1}^{e_1}\ldots x_{i_k}^{e_k}=\frac{1}{\sqrt{|I|^k}}\sum_{b_1\ldots b_k\in I}\xi_{b_1\ldots b_k}\quad\forall k,\forall\xi\in Fix(u^{\otimes k})$$
as defining relations. Moreover, we can use vectors $\xi$ belonging to a basis of $Fix(u^{\otimes k})$.
\end{proposition}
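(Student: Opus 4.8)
The plan is to read the statement off from the medium space $X_{G,I}^{med}$ of Proposition 1.6, combined with the identification of spaces in Theorem 1.7, and then to add a short linearity remark for the final assertion.

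First I would recall that, by its very construction in Proposition 1.6, the algebra $C(X_{G,I}^{med})$ is $C(S^{N-1}_{\mathbb C,+})$ divided by the ideal generated by the relations
$$\sum_{i_1\ldots i_k}\xi_{i_1\ldots i_k}x_{i_1}^{e_1}\ldots x_{i_k}^{e_k}=\frac{1}{\sqrt{|I|^k}}\sum_{b_1\ldots b_k\in I}\xi_{b_1\ldots b_k},$$
taken over all $k$ and all $\xi\in Fix(u^{\otimes k})$. By Theorem 1.7 the spaces $X_{G,I}^{min}$, $X_{G,I}^{med}$, $X_{G,I}^{max}$ are all affine homogeneous with ergodic action, and are identified, via the GNS construction with respect to $\int_X=\int_G\alpha$, to the single space $X_{G,I}$. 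In particular $C(X_{G,I})$ admits the presentation of $C(X_{G,I}^{med})$, so the displayed formulas are defining relations for $X_{G,I}$.

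Next I would verify that this presentation is algebraic in the sense of Definition 2.1. For each fixed colored integer $k$ and each $\xi\in Fix(u^{\otimes k})$, transposing the scalar from the right-hand side turns the relation into an identity $P_\xi(x_1,\ldots,x_N)=0$, where $P_\xi=\sum_{i_1\ldots i_k}\xi_{i_1\ldots i_k}x_{i_1}^{e_1}\ldots x_{i_k}^{e_k}-\frac{1}{\sqrt{|I|^k}}\sum_{b_1\ldots b_k\in I}\xi_{b_1\ldots b_k}$ is a noncommutative $*$-polynomial in the coordinates $x_1,\ldots,x_N$. This exhibits $X_{G,I}$ as an algebraic submanifold of $S^{N-1}_{\mathbb C,+}$, with the stated defining relations.

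For the final assertion I would argue by linearity. Both sides of each relation depend $\mathbb{C}$-linearly on $\xi$, since the entries $\xi_{i_1\ldots i_k}$ appear linearly, without conjugation, in the left-hand polynomial and in the right-hand constant alike. Thus, for fixed $k$, the assignment $\xi\mapsto P_\xi$ is a linear map $Fix(u^{\otimes k})\to C(S^{N-1}_{\mathbb C,+})$, and the ideal it generates is unchanged when $\xi$ ranges only over a basis of $Fix(u^{\otimes k})$, the relations for a general $\xi$ then following by taking linear combinations. The one point to keep an eye on is precisely this linearity—that the dependence on $\xi$ is linear rather than antilinear—but it is immediate from the displayed formulas; everything of genuine content, namely that these relations both hold in and generate $C(X_{G,I})$, is already contained in Propositions 1.4--1.6 and Theorem 1.7, which rest on the Weingarten formula, so no further obstacle remains.
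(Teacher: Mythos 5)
Your proposal is correct and follows exactly the paper's own route: the paper's proof of this proposition is a one-line reference to Proposition 1.6 (the construction of $X_{G,I}^{med}$) combined with the identifications of Theorem 1.7, which is precisely your argument, with the linearity observation for the basis assertion spelled out explicitly.
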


\begin{proof}
This follows indeed from the various results in section 1, and more specifically from Proposition 1.6, by using the identifications made in Theorem 1.7.
\end{proof}

In order to reach to a more categorical description of $X_{G,I}$, the idea will be that of using Frobenius duality. We use colored indices, and we denote by $k\to\bar{k}$ the operation on the colored indices which consists in reversing the index, and switching all the colors. Also, we agree to identify the linear maps $T:(\mathbb C^N)^{\otimes k}\to(\mathbb C^N)^{\otimes l}$ with the corresponding rectangular matrices $T\in M_{N^l\times N^k}(\mathbb  C)$, written $T=(T_{i_1\ldots i_l,j_1\ldots j_k})$. With these conventions, the precise formulation of Frobenius duality that we will need is as follows:

\begin{proposition}
We have an isomorphism of complex vector spaces
$$T\in Hom(u^{\otimes k},u^{\otimes l})\ \longleftrightarrow\ \xi\in Fix(u^{\otimes l}\otimes u^{\otimes\bar{k}})$$
given by the formulae $T_{i_1\ldots i_l,j_1\ldots j_k}=\xi_{i_1\ldots i_lj_k\ldots j_1}$ and $\xi_{i_i\ldots i_lj_1\ldots j_k}=T_{i_1\ldots i_l,j_k\ldots j_1}$. 
\end{proposition}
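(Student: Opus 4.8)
The plan is to establish the Frobenius duality isomorphism by checking three things: first that the proposed formulae $T\mapsto\xi$ and $\xi\mapsto T$ are well-defined and mutually inverse linear maps between the ambient matrix/tensor spaces, second that $T$ being an intertwiner $T\in Hom(u^{\otimes k},u^{\otimes l})$ is \emph{equivalent} to the corresponding $\xi$ being a fixed vector $\xi\in Fix(u^{\otimes l}\otimes u^{\otimes\bar k})$, and third that the correspondence is linear (hence an isomorphism of vector spaces). The linearity and the mutual-inverse property are purely bookkeeping on indices: both formulae amount to reading the entries of a rectangular matrix $T$ as the entries of a single vector $\xi$, with the column multi-index $j_1\ldots j_k$ reversed to $j_k\ldots j_1$ and reinterpreted as output indices of the conjugate representation $u^{\otimes\bar k}$. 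Reversing twice returns the original ordering, so the two maps compose to the identity in both directions, and they are manifestly $\mathbb C$-linear in the entries.

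\medskip

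\textbf{Key steps.} First I would write out the defining equations on each side in coordinates. The intertwiner condition $Tu^{\otimes k}=u^{\otimes l}T$ reads, entry by entry,
$$\sum_{j_1\ldots j_k}T_{i_1\ldots i_l,j_1\ldots j_k}\,u_{j_1a_1}^{e_1^k}\cdots u_{j_ka_k}^{e_k^k}=\sum_{p_1\ldots p_l}u_{i_1p_1}^{e_1^l}\cdots u_{i_lp_l}^{e_l^l}\,T_{p_1\ldots p_l,a_1\ldots a_k},$$
where the colored exponents are dictated by $k$ and $l$. Next I would write the fixed-vector condition $(u^{\otimes l}\otimes u^{\otimes\bar k})\xi=\xi$ in coordinates, recalling that the colors appearing in $u^{\otimes\bar k}$ are the reversed-and-switched colors of $u^{\otimes k}$, and that a fixed vector equation can be rewritten, after moving one group of legs across via unitarity of $u$ (i.e.\ using $S(u_{ij})=u_{ji}^*$ and $u^*=u^{-1}$), into exactly the intertwiner equation above. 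The substitution $T_{i_1\ldots i_l,j_1\ldots j_k}=\xi_{i_1\ldots i_lj_k\ldots j_1}$ is precisely the reindexing that matches the two systems of equations term by term.

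\medskip

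\textbf{Main obstacle.} The genuinely delicate point, and the one I would handle most carefully, is the color bookkeeping: moving the $u^{\otimes k}$ legs from the left-hand side of the intertwiner relation to the right-hand side of a fixed-vector relation forces a conjugation of each of those factors, which is exactly why the $\bar k$ and the index reversal $j_1\ldots j_k\mapsto j_k\ldots j_1$ appear together in the statement. I would verify on one representative colored case that the exponent pattern $e^{k}$ gets sent to the reversed-switched pattern carried by $u^{\otimes\bar k}$, so that the passage from ``$u^{\otimes k}$ acting on the right as a representation'' to ``$u^{\otimes\bar k}$ acting as part of a fixed vector'' is consistent; the general colored case is then identical notation. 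Everything else is a routine consequence of $u$ being unitary together with the antipode axiom $S^2=\mathrm{id}$, which guarantees that the two reindexing formulae are genuine inverses rather than merely adjoints.
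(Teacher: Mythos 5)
Your proposal is correct in substance, but it takes a genuinely different route from the paper. The paper's proof is essentially a citation: it recalls the abstract Frobenius duality of Woronowicz from \cite{wo1}, namely that for arbitrary corepresentations $v\in M_K(C(G))$ and $w\in M_L(C(G))$ the standard identification $\mathcal L(\mathbb C^K,\mathbb C^L)\simeq\mathbb C^{L+K}$ restricts to an isomorphism $Hom(v,w)\simeq Fix(w\otimes\bar v)$, and then simply specializes to $v=u^{\otimes k}$, $w=u^{\otimes l}$, with the observation that $\overline{u^{\otimes k}}=u^{\otimes\bar k}$ under the paper's colored-index conventions. You instead propose to reprove this abstract fact from scratch, by writing both the intertwiner equation and the fixed-vector equation in coordinates and matching them via the reindexing, using the biunitarity of $u$ (both $u^*=u^{-1}$ and $u^t=\bar u^{-1}$, which is where the standing hypothesis $S^2=id$ enters) to move the $u^{\otimes k}$ legs across. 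This is a legitimate and indeed standard computation --- one can check it already in the case $k=l=\circ$, where multiplying the fixed-vector relation $\sum_{p,q}u_{ip}u_{jq}^*\xi_{pq}=\xi_{ij}$ by $u_{ja}$ and summing over $j$ recovers $Tu=uT$ --- and you correctly isolate the one genuinely delicate point, namely that conjugating a tensor factor forces both the color switch and the order reversal, which is exactly why $\bar k$ and the reversal $j_1\ldots j_k\mapsto j_k\ldots j_1$ appear together. What your approach buys is self-containedness and an explicit view of where each hypothesis is used; what the paper's approach buys is brevity and the reassurance that the statement is a direct instance of a general theorem rather than an ad hoc computation. To turn your sketch into a complete proof you would still need to carry out the ``representative colored case'' verification you defer, and to check both implications (intertwiner $\Rightarrow$ fixed vector uses one unitarity relation, the converse uses the other), but there is no gap in the underlying idea.
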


\begin{proof}
This is a well-known result, which follows from the general theory in \cite{wo1}. To be more precise, given integers $K,L\in\mathbb N$, consider the following standard isomorphism, which in matrix notation makes $T=(T_{IJ})\in M_{L\times K}(\mathbb C)$ correspond to $\xi=(\xi_{IJ})$:
$$T\in\mathcal L(\mathbb C^{\otimes K},\mathbb C^{\otimes L})\ \longleftrightarrow\ \xi\in\mathbb C^{\otimes L+K}$$

Given now two arbitrary corepresentations $v\in M_K(C(G))$ and $w\in M_L(C(G))$, the abstract Frobenius duality result established in \cite{wo1} states that the above isomorphism restricts into an isomorphism of vector spaces, as follows:
$$T\in Hom(v,w)\ \longleftrightarrow\ \xi\in Fix(w\otimes\bar{v})$$

In our case, we can apply this result with $v=u^{\otimes k}$ and $w=u^{\otimes l}$. Since, according to our conventions, we have $\bar{v}=u^{\otimes\bar{k}}$, this gives the isomorphism in the statement.
\end{proof}

With the above result in hand, we can enhance the construction of $X_{G,I}$, as follows:

\begin{theorem}
Any affine homogeneous space $X_{G,I}$ is algebraic, with
$$\sum_{i_1\ldots i_l}\sum_{j_1\ldots j_k}T_{i_1\ldots i_l,j_1\ldots j_k}x_{i_1}^{e_1}\ldots x_{i_l}^{e_l}(x_{j_1}^{f_1}\ldots x_{j_k}^{f_k})^*=\frac{1}{\sqrt{|I|^{k+l}}}\sum_{b_1\ldots b_l\in I}\sum_{c_1\ldots c_k\in I}T_{b_1\ldots b_l,c_1\ldots c_k}$$
for any $k,l$, and any $T\in Hom(u^{\otimes k},u^{\otimes l})$, as defining relations.
\end{theorem}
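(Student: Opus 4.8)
The plan is to obtain the intertwiner form of the defining relations directly from the fixed-vector form of Proposition 2.3, by transporting the latter through the Frobenius duality isomorphism of Proposition 2.4. Recall that, by Proposition 2.3, the algebra $C(X_{G,I})$ is presented as the quotient of $C(S^{N-1}_{\mathbb C,+})$ by the relations
$$\sum_{a_1\ldots a_p}\xi_{a_1\ldots a_p}x_{a_1}^{g_1}\ldots x_{a_p}^{g_p}=\frac{1}{\sqrt{|I|^p}}\sum_{b_1\ldots b_p\in I}\xi_{b_1\ldots b_p}$$
indexed by all colored integers $p=g_1\ldots g_p$ and all vectors $\xi\in Fix(u^{\otimes p})$. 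The goal is to show that this family of relations coincides, as a set, with the family indexed by triples $(k,l,T)$ in the statement.

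First I would fix a pair of colored integers $k,l$ and set $p=l\bar k$, so that $Fix(u^{\otimes p})=Fix(u^{\otimes l}\otimes u^{\otimes\bar k})$; by Proposition 2.4 the vectors $\xi$ in this space are exactly the Frobenius duals of the intertwiners $T\in Hom(u^{\otimes k},u^{\otimes l})$, via $\xi_{i_1\ldots i_lj_1\ldots j_k}=T_{i_1\ldots i_l,j_k\ldots j_1}$. I would then substitute this correspondence into the relation above and reindex. The monomial splits as a product over the first $l$ legs, carrying the colors $e_1\ldots e_l$ of $l$, times a product over the last $k$ legs, carrying the colors of $\bar k$; writing $c_r=j_{k+1-r}$ to undo the reversal built into the Frobenius formula, the first factor becomes $x_{i_1}^{e_1}\ldots x_{i_l}^{e_l}$ and the second becomes $(x_{c_1}^{f_1}\ldots x_{c_k}^{f_k})^*$, where $f_1\ldots f_k$ are the colors of $k$.

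The delicate point, and the one I expect to be the main obstacle, is exactly this last identification. The bar operation $k\mapsto\bar k$ both reverses the order of the legs and switches all their colors, and I must check that this matches, on the nose, the effect of taking an adjoint: writing $\bar f$ for the color opposite to $f$, one has $(x_i^f)^*=x_i^{\bar f}$, and hence $x_{c_k}^{\bar{f}_k}\ldots x_{c_1}^{\bar{f}_1}=(x_{c_1}^{f_1}\ldots x_{c_k}^{f_k})^*$. All the bookkeeping of the colored indices is in service of making this identity apply term by term. The right-hand side needs no such care, since the sum over indices ranging in $I$ is invariant under the reindexing $j_k\ldots j_1\mapsto c_1\ldots c_k$, so it passes directly to $\frac{1}{\sqrt{|I|^{k+l}}}\sum_{b_1\ldots b_l\in I}\sum_{c_1\ldots c_k\in I}T_{b_1\ldots b_l,c_1\ldots c_k}$.

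It remains to observe that the two families generate the same ideal, hence the same quotient. Taking $k=0$ in the intertwiner relations, where $Hom(u^{\otimes 0},u^{\otimes l})=Fix(u^{\otimes l})$, returns verbatim the fixed-vector relations of Proposition 2.3; conversely, the computation above shows that every intertwiner relation, for a general triple $(k,l,T)$, is literally the fixed-vector relation attached to $p=l\bar k$ and to the Frobenius dual $\xi$ of $T$. Since Proposition 2.4 makes $T\mapsto\xi$ a bijection, the two generating sets coincide, which gives the algebraicity of $X_{G,I}$ with the stated defining relations.
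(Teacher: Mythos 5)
Your proposal is correct and follows essentially the same route as the paper: apply the fixed-vector presentation of Proposition 2.3 to the colored integer $l\bar{k}$, then rewrite the resulting relation via the Frobenius correspondence $T_{i_1\ldots i_l,j_1\ldots j_k}=\xi_{i_1\ldots i_lj_k\ldots j_1}$ of Proposition 2.4, using $(x_i^f)^*=x_i^{\bar f}$ and the order reversal of the adjoint to match the bar operation. Your closing remark that the Frobenius bijection makes the two families of relations generate the same ideal is a welcome explicit justification of the phrase ``as defining relations,'' which the paper leaves implicit.
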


\begin{proof}
We must prove that the relations in the statement are satisfied, over $X_{G,I}$. We know from Proposition 2.3 above, with $k\to l\bar{k}$, that the following relation holds:
$$\sum_{i_1\ldots i_l}\sum_{j_1\ldots j_k}\xi_{i_1\ldots i_lj_k\ldots j_1}x_{i_1}^{e_1}\ldots x_{i_l}^{e_l}x_{j_k}^{\bar{f}_k}\ldots x_{j_1}^{\bar{f}_1}=\frac{1}{\sqrt{|I|^{k+l}}}\sum_{b_1\ldots b_l\in I}\sum_{c_1\ldots c_k\in I}\xi_{b_1\ldots b_lc_k\ldots c_1}$$

In terms of the matrix $T_{i_1\ldots i_l,j_1\ldots j_k}=\xi_{i_1\ldots i_lj_k\ldots j_1}$ from Proposition 2.3, we obtain:
$$\sum_{i_1\ldots i_l}\sum_{j_1\ldots j_k}T_{i_1\ldots i_l,j_1\ldots j_k}x_{i_1}^{e_1}\ldots x_{i_l}^{e_l}x_{j_k}^{\bar{f}_k}\ldots x_{j_1}^{\bar{f}_1}=\frac{1}{\sqrt{|I|^{k+l}}}\sum_{b_1\ldots b_l\in I}\sum_{c_1\ldots c_k\in I}T_{b_1\ldots b_l,c_1\ldots c_k}$$

But this gives the formula in the statement, and we are done.
\end{proof}

\section{Tannakian duality}

In this section we state and prove our main result. The description of the affine homogeneous spaces found in Theorem 2.5 above suggests the following notion:

\begin{definition}
Given an algebraic submanifold $X\subset S^{N-1}_{\mathbb C,+}$ and a subset $I\subset\{1,\ldots,N\}$, we say that $X$ is $I$-affine when $C(X)$ is presented by relations of type
$$\sum_{i_1\ldots i_l}\sum_{j_1\ldots j_k}T_{i_1\ldots i_l,j_1\ldots j_k}x_{i_1}^{e_1}\ldots x_{i_l}^{e_l}(x_{j_1}^{f_1}\ldots x_{j_k}^{f_k})^*=\frac{1}{\sqrt{|I|^{k+l}}}\sum_{b_1\ldots b_l\in I}\sum_{c_1\ldots c_k\in I}T_{b_1\ldots b_l,c_1\ldots c_k}$$
with the operators $T$ belonging to certain linear spaces $F(k,l)\subset M_{N^l\times N^k}(\mathbb C)$, which altogether form a tensor category $F=(F(k,l))$.
\end{definition}

According to Theorem 2.5, any affine homogeneous space $X_{G,I}$ is an $I$-affine manifold, with the corresponding tensor category being the one associated to the quantum group $G\subset U_N^+$ which produces it, formed by the linear spaces $F(k,l)=Hom(u^{\otimes k},u^{\otimes l})$.

We will need some basic facts regarding the quantum affine actions. Following Definition 1.2, we say that a closed subgroup $G\subset U_N^+$ acts affinely on a closed subset $X\subset S^{N-1}_{\mathbb C,+}$ when the formula $\Phi(x_i)=\sum_ju_{ij}\otimes x_j$ defines a morphism of $C^*$-algebras.

We have the following standard result, from \cite{bme}, inspired from \cite{go1}, \cite{go2}:

\begin{proposition}
Given an algebraic manifold $X\subset S^{N-1}_{\mathbb C,+}$, the quantum group
$$G^+(X)=\max\left\{G\subset U_N^+\Big|G\curvearrowright X\right\}$$
exists, and is unique. We call it affine quantum isometry group of $X$.
\end{proposition}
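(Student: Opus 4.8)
The plan is to build $G^+(X)$ by hand, as a universal quotient of $C(U_N^+)$, and then to verify that this quotient is again a compact quantum group by \emph{applying its own universal property twice}, rather than by a direct and laborious computation with Hopf ideals. The construction itself is routine; the content lies in recognizing the quantum group structure on the result.

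First I would record the starting observation that $U_N^+$ already acts affinely on the whole free sphere: the formula $\Phi_0(x_i)=\sum_j u_{ij}\otimes x_j$ defines a $*$-morphism $C(S^{N-1}_{\mathbb C,+})\to C(U_N^+)\otimes C(S^{N-1}_{\mathbb C,+})$, as one checks directly from the biunitarity of $u$. Since $X$ is algebraic, I may write $C(X)=C(S^{N-1}_{\mathbb C,+})/\mathcal I$ with quotient map $p$, where $\mathcal I$ is generated by the polynomials $P_i$ of Definition 2.1. Now for a quotient $\pi\colon C(U_N^+)\to B$ with $u$ remaining biunitary, the requirement that $x_i\mapsto\sum_j\pi(u_{ij})\otimes x_j$ descend to a morphism $C(X)\to B\otimes C(X)$ is \emph{equivalent} to the condition $(\pi\otimes p)\Phi_0(\mathcal I)=0$, which is a family of $C^*$-algebraic relations among the elements $\pi(u_{ij})$. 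These relations are norm-bounded (biunitarity forces $\|\pi(u_{ij})\|\le 1$), so among all quotients of $C(U_N^+)$ satisfying them there is a largest one, obtained by dividing by the intersection of the associated ideals. I set $C(G^+(X))$ to be this universal algebra, equipped with its tautological affine coaction $\Phi$.

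The main obstacle is then to check that $C(G^+(X))$ is genuinely a Woronowicz algebra, that is, that $\Delta,\varepsilon,S$ descend from $C(U_N^+)$. The clean way, avoiding any manipulation of the defining ideal, is to feed suitable targets into the universal property of $C(G^+(X))$. For the comultiplication I consider $w_{ij}=\sum_k u_{ik}\otimes u_{kj}\in C(G^+(X))\otimes C(G^+(X))$: it is biunitary, and the assignment $x_i\mapsto\sum_j w_{ij}\otimes x_j$ is precisely the composition $(id\otimes\Phi)\Phi$, hence a composite of two honest $*$-morphisms, and so it extends to a morphism $C(X)\to C(G^+(X))^{\otimes 2}\otimes C(X)$. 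By the universal property, $u_{ij}\mapsto w_{ij}$ then defines $\Delta$. For the counit, taking the target $\mathbb C$ with $v_{ij}=\delta_{ij}$ makes $x_i\mapsto x_i$ the identity morphism, so $\varepsilon$ descends. Finally, once $\Delta$ is available while $u$ and $\bar u$ remain invertible in $C(G^+(X))$, the pair $(C(G^+(X)),u)$ is a compact matrix quantum group, so the antipode $S(u_{ij})=u_{ji}^*$ is supplied automatically by Woronowicz's theory.

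It then remains only to verify maximality and uniqueness, which is immediate from the construction: any closed subgroup $G\subset U_N^+$ with $G\curvearrowright X$ yields a quotient $C(U_N^+)\to C(G)$ satisfying the defining relations above, hence factoring through $C(G^+(X))$, so that $G\subset G^+(X)$; and $G^+(X)$ acts on $X$ by construction. Thus $G^+(X)$ is the maximum of the family, and a maximum is unique. I expect the only genuinely delicate point to be the descent of $\Delta$, where the two-fold use of the universal property replaces what would otherwise be a painful verification that the defining ideal is a Hopf ideal.
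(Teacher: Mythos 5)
Your construction is the same as the paper's: you identify the relations making $x_i\mapsto\sum_j u_{ij}\otimes x_j$ a $*$-morphism on $C(X)$ and divide $C(U_N^+)$ by them, exactly as in the paper's proof, which then defers the remaining verifications to \cite{bme}. The extra material you supply --- the descent of $\Delta$ via $(id\otimes\Phi)\Phi$, of $\varepsilon$ via the target $\mathbb C$, and of $S$ via Woronowicz's reconstruction theorem for a biunitary $u$ with comultiplication --- is the standard argument behind that reference and is correct, so your proof is a more detailed version of the same route rather than a different one.
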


\begin{proof}
In order to have a universal coaction, the relations defining $G^+(X)\subset U_N^+$ must be those making $x_i\to X_i=\sum_ju_{ij}\otimes x_j$ a morphism of algebras. Thus, in order to construct $G^+(X)$, we just have to clarify how the relations $P_\alpha(x_1,\ldots,x_N)=0$ defining $X$ are interpreted inside $C(U_N^+)$. So, pick one such polynomial, $P=P_\alpha$, and write it:
$$P(x_1,\ldots,x_N)=\sum_r\alpha_r\cdot x_{i_1^r}\ldots x_{i_{s(r)}^r}$$

Now if we formally replace each coordinate $x_i\in C(X)$ by the corresponding element $X_i=\sum_ju_{ij}\otimes x_j\in C(U_N^+)\otimes C(X)$, the following formula must hold:
$$P(X_1,\ldots,X_N)=\sum_r\alpha_r\sum_{j_1^r\ldots j_{s(r)}^r}u_{i_1^rj_1^r}\ldots u_{i_{s(r)}^rj_{s(r)}^r}\otimes x_{j_1^r}\ldots x_{j_{s(r)}^r}$$

Thus the relations $P(X_1,\ldots,X_N)=0$ correspond indeed to certain polynomial relations between the standard generators $u_{ij}\in C(U_N^+)$, and this gives the result. See \cite{bme}.
\end{proof} 

Now by getting back to our questions, let us study the quantum isometry groups of the manifolds $X\subset S^{N-1}_{\mathbb C,+}$ which are $I$-affine. We have here the following result:

\begin{proposition}
For an $I$-affine manifold $X\subset S^{N-1}_{\mathbb C,+}$ we have 
$$G\subset G^+(X)$$
where $G\subset U_N^+$ is the Tannakian dual of the associated tensor category $F$.
\end{proposition}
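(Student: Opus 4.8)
The plan is to show directly that $G$ acts affinely on $X$. Since by Proposition 3.2 the quantum group $G^+(X)$ is the \emph{maximal} one acting affinely on $X$, producing any affine action of $G$ automatically yields the inclusion $G\subset G^+(X)$. Concretely, I must prove that $\Phi(x_i)=\sum_j u_{ij}\otimes x_j$ extends to a $*$-morphism $C(X)\to C(G)\otimes C(X)$. By universality of $C(X)$ as a quotient of $C(S^{N-1}_{\mathbb C,+})$, and since the elements $X_i=\sum_j u_{ij}\otimes x_j$ satisfy the two free sphere relations (because $u$ is biunitary, $\sum_i u_{ij}u_{ik}^*=\delta_{jk}$ forces $\sum_i X_iX_i^*=1\otimes1$, and similarly for the other), it suffices to check that these $X_i$ satisfy the defining relations of $C(X)$. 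As $G$ is the Tannakian dual of $F$, we have $F(k,l)=Hom(u^{\otimes k},u^{\otimes l})$, so by Definition 3.1 these relations are precisely the $I$-affine relations attached to the intertwiners $T\in Hom(u^{\otimes k},u^{\otimes l})$.

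To verify them I would first pass, via the Frobenius duality isomorphism of Proposition 2.4 and exactly as in the proof of Theorem 2.5, from $T$ to the associated fixed vector $\xi\in Fix(u^{\otimes l}\otimes u^{\otimes\bar k})$. This reduces the task to checking, for each colored length $m$ and each $\xi\in Fix(u^{\otimes m})$, that the one-sided relation of Proposition 2.3,
$$\sum_{a_1\ldots a_m}\xi_{a_1\ldots a_m}x_{a_1}^{E_1}\ldots x_{a_m}^{E_m}=\frac{1}{\sqrt{|I|^m}}\sum_{a_1\ldots a_m\in I}\xi_{a_1\ldots a_m}$$
is preserved by $\Phi$. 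Writing $x_a^E=x_{a_1}^{E_1}\ldots x_{a_m}^{E_m}$ and denoting by $(u^{\otimes m})_{ap}$ the colored matrix entries, application of $\Phi$ to the left-hand side and collection of the $C(G)$-coefficients gives
$$\Phi\Big(\sum_a\xi_a x_a^E\Big)=\sum_p\Big(\sum_a\xi_a(u^{\otimes m})_{ap}\Big)\otimes x_p^E.$$
The whole computation then hinges on the contraction identity $\sum_a\xi_a(u^{\otimes m})_{ap}=\xi_p$, which is exactly the fixed-vector condition for $\xi$ with respect to the colored biunitary corepresentation $u^{\otimes m}$ (read with the row contraction, the same identity that makes the map $\alpha$ of Definition 1.2 land in the scalars). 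Granting it, the right-hand side above collapses to $\sum_p\xi_p\otimes x_p^E=1\otimes\big(\sum_p\xi_p x_p^E\big)$, and the relation already holding inside $C(X)$ rewrites this as $1\otimes$ the required constant, which is precisely $\Phi$ applied to the right-hand side of the relation. Hence every defining relation of $C(X)$ is preserved, $\Phi$ is a well-defined coaction, and $G\subset G^+(X)$.

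The main obstacle is the bookkeeping concealed in the identity $\sum_a\xi_a(u^{\otimes m})_{ap}=\xi_p$: one must keep the summation on the first (row) indices and ensure that the colored exponents on the $x_a^E$ are matched by the corresponding colors on the entries of $u^{\otimes m}$, so that the biunitarity of $u$ together with the axiom $S^2=\mathrm{id}$ indeed converts the fixed-vector equation $u^{\otimes m}\xi=\xi$ into the transposed contraction used above. Everything else — compatibility of $\Phi$ with the $*$-structure, and the two-sided-to-one-sided reduction — is routine once the Frobenius dictionary of Proposition 2.4 is in place.
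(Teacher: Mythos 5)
Your proof is correct and follows essentially the same route as the paper: Proposition 3.2 reduces the inclusion $G\subset G^+(X)$ to checking that $X_i=\sum_j u_{ij}\otimes x_j$ is compatible with the defining relations of $C(X)$, and this is exactly what the paper's proof asserts when it states that the vanishing of the relations $P(X_1,\ldots,X_N)=0$ corresponds to the Tannakian relations defining $G$. You simply carry out explicitly, via the Frobenius reduction to fixed vectors and the contraction identity, the verification that the paper leaves implicit; the row-versus-column bookkeeping you flag is genuine but is resolved by unitarity and the conjugation-stability of the category, in the spirit of Proposition 4.2.
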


\begin{proof}
We recall from the proof of Proposition 3.2 above that the relations defining $G^+(X)$ are those expressing the vanishing of the following quantities:
$$P(X_1,\ldots,X_N)=\sum_r\alpha_r\sum_{j_1^r\ldots j_{s(r)}^r}u_{i_1^rj_1^r}\ldots u_{i_{s(r)}^rj_{s(r)}^r}\otimes x_{j_1^r}\ldots x_{j_{s(r)}^r}$$

In the case of an $I$-affine manifold, the defining relations are those from Definition 3.1 above, with the corresponding polynomials $P$ being indexed by the elements of $F$. But the vanishing of the associated relations $P(X_1,\ldots,X_N)=0$ corresponds precisely to the Tannakian relations defining $G\subset U_N^+$, and so we obtain $G\subset G^+(X)$, as claimed.
\end{proof}

We have now all the needed ingredients, and we can prove:

\begin{theorem}
Assuming that an algebraic manifold $X\subset S^{N-1}_{\mathbb C,+}$ is $I$-affine, with associated tensor category $F$, the following happen:
\begin{enumerate}
\item We have an inclusion $G\subset G^+(X)$, where $G$ is the Tannakian dual of $F$.

\item $X$ is an affine homogeneous space, $X=X_{G,I}$, over this quantum group $G$.
\end{enumerate}
\end{theorem}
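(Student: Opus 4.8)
The first assertion is exactly Proposition 3.3, so the real content lies in assertion (2), and the plan is to identify $X$ with the canonical space $X_{G,I}$ by showing that the two algebras $C(X)$ and $C(X_{G,I})$ are presented by one and the same family of relations. The bridge between the abstract category $F$ and the concrete geometry is Tannakian duality, and I would organize the whole argument around it.

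First I would invoke Woronowicz's Tannakian duality, in the form underlying Proposition 2.2: since $F=(F(k,l))$ is a genuine tensor category, its Tannakian dual $G\subset U_N^+$ exists and satisfies $Hom(u^{\otimes k},u^{\otimes l})=F(k,l)$ for all colored integers $k,l$. The nontrivial input here is \emph{completeness} --- that imposing the relations $T\in Hom(u^{\otimes k},u^{\otimes l})$ for $T\in F(k,l)$ creates no new intertwiners, so that the intertwiner spaces of $G$ recover $F$ exactly, rather than some larger category.

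Next I would apply Theorem 2.5 to this quantum group $G$: the associated affine homogeneous space $X_{G,I}$ is algebraic, presented inside $S^{N-1}_{\mathbb C,+}$ by precisely the relations
$$\sum_{i_1\ldots i_l}\sum_{j_1\ldots j_k}T_{i_1\ldots i_l,j_1\ldots j_k}x_{i_1}^{e_1}\ldots x_{i_l}^{e_l}(x_{j_1}^{f_1}\ldots x_{j_k}^{f_k})^*=\frac{1}{\sqrt{|I|^{k+l}}}\sum_{b_1\ldots b_l\in I}\sum_{c_1\ldots c_k\in I}T_{b_1\ldots b_l,c_1\ldots c_k}$$
indexed by $T\in Hom(u^{\otimes k},u^{\otimes l})=F(k,l)$. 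But this is word for word the presentation of $C(X)$ coming from the $I$-affine hypothesis of Definition 3.1. Hence $C(X)$ and $C(X_{G,I})$ are universal quotients of $C(S^{N-1}_{\mathbb C,+})$ by identical ideals, so that $C(X)=C(X_{G,I})$ and $X=X_{G,I}$ as subspaces of the free sphere. Since $X_{G,I}$ carries, by construction, the affine coaction $\Phi$, the map $\alpha$, and the ergodicity property of Definition 1.2 --- and $G$ does act on $X$ by assertion (1) --- these transport to $X$, so that $X$ is genuinely an affine homogeneous space over $G$, with index set $I$.

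The main obstacle I anticipate is making the phrase ``presented by the same relations'' fully rigorous, and it has two layers. The harder, conceptual layer is the completeness of $F$ discussed above, which is exactly the depth of Tannakian duality, and which I would simply cite from Woronowicz through Proposition 2.2. The more routine layer is bookkeeping: the defining relations are affine-linear in $T$, so that imposing them on a spanning set of each $F(k,l)$ is equivalent to imposing them for all $T\in F(k,l)$; and one must check that the equivalence-class identifications of Theorem 1.7 --- the minimal, medium, and maximal spaces collapsed via the GNS construction --- are compatible with the presentation furnished by Theorem 2.5. Granting these, assertion (2) is immediate from the matching of presentations.
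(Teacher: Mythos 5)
Your proposal is correct, but it follows a genuinely different route from the paper's. The paper does not compare presentations at all: it verifies directly that $X$ satisfies Definition 1.2 over the Tannakian dual $G$ of $F$, namely it (i) builds $\alpha:x_i\mapsto\frac{1}{\sqrt{|I|}}\sum_{j\in I}u_{ij}$ by checking the defining relations of $C(X)$ by hand --- for $T\in F(k,l)\subset Hom(u^{\otimes k},u^{\otimes l})$ one has $\sum_{i_1\ldots i_l}\sum_{j_1\ldots j_k}T_{i_1\ldots i_l,j_1\ldots j_k}u_{i_1r_1}^{e_1}\ldots u_{i_lr_l}^{e_l}(u_{j_1s_1}^{f_1}\ldots u_{j_ks_k}^{f_k})^*=T_{r_1\ldots r_l,s_1\ldots s_k}$, and summing over $r_i,s_i\in I$ gives exactly the required relation --- then (ii) takes $\Phi$ from Proposition 3.3, and (iii) argues that ergodicity is automatic for Tannakian reasons; the identification $X=X_{G,I}$ then comes from Theorem 1.7. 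Your argument instead routes everything through Theorem 2.5 and the completeness half of Woronowicz duality, $Hom(u^{\otimes k},u^{\otimes l})=F(k,l)$ with no new intertwiners, to conclude that $C(X)$ and $C(X_{G,I})$ are quotients of $C(S^{N-1}_{\mathbb C,+})$ by the very same ideal. This is arguably cleaner and shorter, at the cost of leaning on Theorem 2.5 as a genuine presentation statement (not merely a list of relations that hold) and on the GNS identifications of Theorem 1.7, both of which you correctly flag. It is worth noting that the two routes need Tannakian completeness in different places: you need it to match the presentations, while the paper needs it precisely at its tersest step, the ergodicity claim (3), since the ergodicity relations of Proposition 1.4 are indexed by a spanning set of $Fix_G(u^{\otimes k})$, and these are consequences of the $I$-affine presentation only once one knows $Fix_G(u^{\otimes k})=F(\emptyset,k)$. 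Your write-up makes this dependence explicit, which is a point in its favor.
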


\begin{proof}
In the context of Definition 3.1, the tensor category $F$ there gives rise, by the Tannakian result in Proposition 2.2, to a quantum group $G\subset U_N^+$. What is left now is to construct the affine space morphisms $\alpha,\Phi$, and the proof here goes as follows:

(1) Construction of $\alpha$. We want to construct a morphism, as follows:
$$\alpha:C(X)\to C(G)\quad:\quad x_i\to X_i=\frac{1}{\sqrt{|I|}}\sum_{j\in I}u_{ij}$$

In view of Definition 3.1, we must therefore prove that we have:
$$\sum_{i_1\ldots i_l}\sum_{j_1\ldots j_k}T_{i_1\ldots i_l,j_1\ldots j_k}X_{i_1}^{e_1}\ldots X_{i_l}^{e_l}(X_{j_1}^{f_1}\ldots X_{j_k}^{f_k})^*=\frac{1}{\sqrt{|I|^{k+l}}}\sum_{b_1\ldots b_l\in I}\sum_{c_1\ldots c_k\in I}T_{b_1\ldots b_l,c_1\ldots c_k}$$

By replacing the variables $X_i$ by their above values, we want to prove that:
$$\sum_{i_1\ldots i_l}\sum_{j_1\ldots j_k}\sum_{r_1\ldots r_l\in I}\sum_{s_1\ldots s_k\in I}T_{i_1\ldots i_l,j_1\ldots j_k}u_{i_1r_1}^{e_1}\ldots u_{i_lr_l}^{e_l}(u_{j_1s_1}^{f_1}\ldots u_{j_ks_k}^{f_k})^*=\sum_{b_1\ldots b_l\in I}\sum_{c_1\ldots c_k\in I}T_{b_1\ldots b_l,c_1\ldots c_k}$$

Now observe that from the relation $T\in Hom(u^{\otimes k},u^{\otimes l})$ we obtain:
$$\sum_{i_1\ldots i_l}\sum_{j_1\ldots j_k}T_{i_1\ldots i_l,j_1\ldots j_k}u_{i_1r_1}^{e_1}\ldots u_{i_lr_l}^{e_l}(u_{j_1s_1}^{f_1}\ldots u_{j_ks_k}^{f_k})^*=T_{r_1\ldots r_l,s_1\ldots s_k}$$

Thus, by summing over indices $r_i\in I$ and $s_i\in I$, we obtain the desired formula.

(2) Construction of $\Phi$. We want to construct a morphism, as follows:
$$\Phi:C(X)\to C(G)\otimes C(X)\quad:\quad x_i\to X_i=\sum_ju_{ij}\otimes x_j$$

But this is precisely the coaction map constructed in Proposition 3.3 above.

(3) Proof of the ergodicity. If we go back here to Proposition 1.4, we see that the ergodicty condition is equivalent to a number of Tannakian conditions, which are automatic in our case. Thus, the ergodicity condition is automatic, and we are done.
\end{proof}

\section{Further results}

The Tannakian result obtained in section 3 above, based on the notion of $I$-affine manifold from Definition 3.1, remains quite theoretical. The problem is that Definition 3.1 still makes reference to a tensor category, and so the abstract characterization of the affine homogeneous spaces that we obtain in this way is not totally intrinsic. 

We believe that some deeper results should hold as well. To be more precise, the work on noncommutative spheres in \cite{bme} suggests that the relevant category $F$ should appear in a more direct way from $X$. In analogy with Definition 3.1, let us formulate:

\begin{definition}
Given a submanifold $X\subset S^{N-1}_{\mathbb C,+}$ and a subset $I\subset\{1,\ldots,N\}$, we let $F_{X,I}(k,l)\subset M_{N^l\times N^k}(\mathbb C)$ be the linear space of linear maps $T$ such that
$$\sum_{i_1\ldots i_l}\sum_{j_1\ldots j_k}T_{i_1\ldots i_l,j_1\ldots j_k}x_{i_1}^{e_1}\ldots x_{i_l}^{e_l}(x_{j_1}^{f_1}\ldots x_{j_k}^{f_k})^*=\frac{1}{\sqrt{|I|^{k+l}}}\sum_{b_1\ldots b_l\in I}\sum_{c_1\ldots c_k\in I}T_{b_1\ldots b_l,c_1\ldots c_k}$$
holds over $X$. We say that $X$ is $I$-saturated when $F_{X,I}=(F_{X,l}(k,l))$ is a tensor category, and the collection of the above relations presents the algebra $C(X)$.
\end{definition}

Observe that any $I$-saturated manifold is automatically $I$-affine. The point is that the results in \cite{bme} seem to suggest that the converse of this fact should hold, in the sense that any $I$-affine manifold should be automatically $I$-saturated. Such a result would of course substantially improve Theorem 3.4 above, and make it ready for applications.

We do not have a proof of this fact, but we would like to present now a few preliminary observations on this subject. First of all, we have the following result:

\begin{proposition}
The linear spaces $F_{X,I}(k,l)\subset M_{N^l\times N^k}(\mathbb C)$ constructed above have the following properties:
\begin{enumerate}
\item They contain the units.

\item They are stable by conjugation.

\item They satisfy the Frobenius duality condition.
\end{enumerate}
\end{proposition}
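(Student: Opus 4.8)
The plan is to read properties (1)--(3) as the three ``free'' axioms of a tensor category --- units, stability under the involution, and Frobenius duality --- all of which can be checked by a formal rewriting of the defining relation of $F_{X,I}(k,l)$, over an arbitrary $X$. The only tools needed are the sphere relations $\sum_ix_ix_i^*=\sum_ix_i^*x_i=1$, the $*$-algebra identity $(ab)^*=b^*a^*$, and the Frobenius isomorphism of Proposition 2.3. It is worth stressing at the outset that the two genuinely hard axioms, namely stability under composition and under tensor product, are deliberately \emph{not} among the three claimed here: they are exactly what is needed to upgrade ``$I$-affine'' to ``$I$-saturated'', and they do not follow from this kind of formal manipulation.

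For the units, I would fix a colored integer $k$ and test the identity operator, with $T_{i_1\ldots i_k,j_1\ldots j_k}=\delta_{i_1j_1}\cdots\delta_{i_kj_k}$ (so $l=k$, with matching colors $f_s=e_s$). The left-hand side of the defining relation then reduces to $\sum_{i_1\ldots i_k}x_{i_1}^{e_1}\cdots x_{i_k}^{e_k}(x_{i_1}^{e_1}\cdots x_{i_k}^{e_k})^*$, which telescopes to $1$: the innermost sum $\sum_{i_k}x_{i_k}^{e_k}(x_{i_k}^{e_k})^*$ equals $\sum_ix_ix_i^*$ or $\sum_ix_i^*x_i$ according to the colour $e_k$, hence equals $1$ by the sphere relations, and one iterates. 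The right-hand side equals $\frac{1}{|I|^k}\sum_{b_1\ldots b_k\in I}1=1$, so $id\in F_{X,I}(k,k)$. For the conjugation, given $T\in F_{X,I}(k,l)$ I would apply the involution of $C(X)$ to both sides of its defining relation; using $(ab)^*=b^*a^*$ the left-hand side becomes $\sum\overline{T_{i_1\ldots i_l,j_1\ldots j_k}}\,x_{j_1}^{f_1}\cdots x_{j_k}^{f_k}(x_{i_1}^{e_1}\cdots x_{i_l}^{e_l})^*$, which is precisely the left-hand side of the defining relation for the adjoint $T^*$, with $(T^*)_{j_1\ldots j_k,i_1\ldots i_l}=\overline{T_{i_1\ldots i_l,j_1\ldots j_k}}$, while the right-hand side becomes $\frac{1}{\sqrt{|I|^{k+l}}}\sum(T^*)_{c_1\ldots c_k,b_1\ldots b_l}$. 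Hence $T^*\in F_{X,I}(l,k)$, which is the asserted stability.

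For the Frobenius duality I would invoke Proposition 2.3, passing from $T\in M_{N^l\times N^k}(\mathbb C)$ to the vector $\xi$ with $T_{i_1\ldots i_l,j_1\ldots j_k}=\xi_{i_1\ldots i_lj_k\ldots j_1}$. Rewriting $(x_{j_1}^{f_1}\cdots x_{j_k}^{f_k})^*=x_{j_k}^{\bar f_k}\cdots x_{j_1}^{\bar f_1}$, the defining relation of $F_{X,I}(k,l)$ turns verbatim into the Proposition 1.6 relation for $\xi$ attached to the colored integer $l\bar k$. This is exactly the computation performed in the proof of Theorem 2.5, now read as an equivalence valid over an arbitrary $X$: it gives $T\in F_{X,I}(k,l)$ if and only if $\xi\in F_{X,I}(\emptyset,l\bar k)$, which is the Frobenius duality condition. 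The most delicate point within (1)--(3) is the index bookkeeping in this last step --- tracking the reversal together with the colour switch $f\mapsto\bar f$ induced by the adjoint --- but it is purely formal and already present in Theorem 2.5. The real obstacle lies past this proposition, in closing $F_{X,I}$ under composition and tensor product, which is precisely the question left open.
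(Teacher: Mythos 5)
Your proposal is correct and follows essentially the same route as the paper: the unit via the telescoping use of the sphere relations, conjugation by applying the involution to the defining relation, and Frobenius duality via the isomorphism $T\leftrightarrow\xi$ of Proposition 2.4, exactly as in the proof of Theorem 2.5. Your closing remark that stability under composition and tensor product is the genuinely open part also matches the paper's subsequent discussion.
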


\begin{proof}
All these assertions are elementary, as follows:

(1) Consider indeed the unit map. The associated relation is:
$$\sum_{i_1\ldots i_k}x_{i_1}^{e_1}\ldots x_{i_k}^{e_k}(x_{i_1}^{e_1}\ldots x_{i_k}^{e_k})^*=1$$

But this relation holds indeed, due to the defining relations for $S^{N-1}_{\mathbb C,+}$.

(2) We have indeed the following sequence of equivalences:
\begin{eqnarray*}
&&T^*\in F_{X,I}(l,k)\\
&\iff&\sum_{i_1\ldots i_l}\sum_{j_1\ldots j_k}T^*_{j_1\ldots j_k,i_1\ldots i_l}x_{j_1}^{f_1}\ldots x_{j_k}^{f_k}(x_{i_1}^{e_1}\ldots x_{i_l}^{e_l})^*=\frac{1}{\sqrt{|I|^{k+l}}}\sum_{b_1\ldots i_l\in I}\sum_{c_1\ldots c_k\in I}T^*_{c_1\ldots c_k,b_1\ldots b_l}\\
&\iff&\sum_{i_1\ldots i_l}\sum_{j_1\ldots j_k}T_{i_1\ldots i_l,j_1\ldots j_k}x_{i_1}^{e_1}\ldots x_{i_l}^{e_l}(x_{j_1}^{f_1}\ldots x_{j_k}^{f_k})^*=\frac{1}{\sqrt{|I|^{k+l}}}\sum_{b_1\ldots b_l\in I}\sum_{c_1\ldots c_k\in I}T_{b_1\ldots b_l,c_1\ldots c_k}\\
&\iff&T\in F_{X,I}(k,l)
\end{eqnarray*}

(3) We have indeed a correspondence $T\in F_{X,I}(k,l)\ \longleftrightarrow\ \xi\in F_{X,I}(\emptyset,l\bar{k})$, given by the usual formulae for the Frobenius isomorphism, from Proposition 2.4.
\end{proof}

Based on the above result, we can now formulate our observations, as follows:

\begin{theorem}
Given a closed subgroup $G\subset U_N^+$, and an index set $I\subset\{1,\ldots,N\}$, consider the corresponding affine homogeneous space $X_{G,I}\subset S^{N-1}_{\mathbb C,+}$.
\begin{enumerate}
\item $X_{G,I}$ is $I$-saturated precisely when the collection of spaces $F_{X,I}=(F_{X,I}(k,l))$ is stable under compositions, and under tensor products.

\item We have $F_{X,I}=F$ precisely when $\sum_{j_1\ldots j_l\in I}(\sum_{i_1\ldots i_l}\xi_{i_1\ldots i_l}u_{i_1j_1}^{e_1}\ldots u_{i_lj_l}^{e_l}-\xi_{j_1\ldots j_l})=0$ implies $\sum_{i_1\ldots i_l}\xi_{i_1\ldots i_l}u_{i_1j_1}^{e_1}\ldots u_{i_lj_l}^{e_l}-\xi_{j_1\ldots j_l}=0$, for any $j_1,\ldots,j_l$.
\end{enumerate}
\end{theorem}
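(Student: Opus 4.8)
Looking at the two assertions, my plan is to prove each by unwinding the definitions and using the Frobenius-duality reduction already available.

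\textbf{Assertion (1).} The plan is to observe that $X_{G,I}$ being $I$-saturated means two things, by Definition 4.1: that $F_{X,I}$ is a tensor category, and that the associated relations present $C(X_{G,I})$. The second condition is already guaranteed, since $X_{G,I}$ is $I$-affine by Theorem 2.5 and hence its defining relations (those coming from $F=Hom(u^{\otimes k},u^{\otimes l})\subset F_{X,I}$) present $C(X_{G,I})$; adding the extra relations indexed by $F_{X,I}\setminus F$ can only enlarge the ideal, but since these relations already hold over $X_{G,I}$ they are redundant, so $C(X_{G,I})$ is presented by the full collection $F_{X,I}$ as well. Thus the genuinely remaining content of $I$-saturation is exactly that $F_{X,I}$ is a tensor category. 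Now a tensor category in this context is a collection of linear spaces containing the units, stable under conjugation, satisfying Frobenius duality, and stable under composition and tensor product. By Proposition 4.2 the first three properties hold automatically. Therefore $F_{X,I}$ is a tensor category if and only if it is stable under compositions and under tensor products, which is precisely the stated criterion.

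\textbf{Assertion (2).} Here the plan is to use the Frobenius isomorphism of Proposition 2.4 to reduce both $F$ and $F_{X,I}$ to their ``fixed vector'' incarnations, so that comparing $F_{X,I}=F$ becomes comparing two spaces of vectors $\xi\in(\mathbb C^N)^{\otimes \bar l}$ cut out at level $\emptyset\to l$. First I would note the inclusion $F\subset F_{X,I}$ always holds (since the defining relations of $X_{G,I}$ say exactly that every $T\in F$ lies in $F_{X,I}$), so equality is equivalent to the reverse inclusion $F_{X,I}\subset F$. Translating through Frobenius duality, membership $\xi\in F_{X,I}(\emptyset,l)$ is the single scalar relation $\sum_{j_1\ldots j_l\in I}(\sum_{i_1\ldots i_l}\xi_{i_1\ldots i_l}u_{i_1j_1}^{e_1}\ldots u_{i_lj_l}^{e_l}-\xi_{j_1\ldots j_l})=0$ holding in $C(G)$, after one applies $\int_G$ and the Weingarten expansion (Proposition 1.5) to convert the relation over $X_{G,I}$ into a statement inside $C(G)$; whereas membership $\xi\in Fix(u^{\otimes l})$, i.e. $\xi\in F(\emptyset,l)$, is the componentwise relation $\sum_{i_1\ldots i_l}\xi_{i_1\ldots i_l}u_{i_1j_1}^{e_1}\ldots u_{i_lj_l}^{e_l}-\xi_{j_1\ldots j_l}=0$ for every $(j_1,\ldots,j_l)$. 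Hence $F_{X,I}\subset F$ holds if and only if the vanishing of the $I$-summed quantity forces the vanishing of each summand, which is exactly the stated implication.

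\textbf{Main obstacle.} The step I expect to require the most care is the passage, in assertion (2), from the relation ``over $X_{G,I}$'' to the corresponding relation inside $C(G)$. The definition of $F_{X,I}$ is phrased as an identity among the generators $x_i$ of $C(X_{G,I})$, and I must show this is equivalent to the displayed column-vector identity over $C(G)$; the natural tool is the map $\alpha:C(X)\to C(G)$ together with injectivity on the relevant subspace, or equivalently a direct Weingarten computation as in Propositions 1.5 and 1.6, checking that no information is lost when one replaces the $x_i$ by $\frac{1}{\sqrt{|I|}}\sum_{j\in I}u_{ij}$. Once this dictionary is set up cleanly, both equivalences become formal, and the two criteria in the statement drop out.
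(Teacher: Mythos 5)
Your proposal is correct and follows essentially the same route as the paper: the presentation condition is disposed of via the inclusions $F(k,l)\subset F_{X,I}(k,l)$ and the fact that the $F$-relations already present $C(X_{G,I})$, part (1) then reduces to the two missing tensor-category axioms via Proposition 4.2 (1)--(2), and part (2) reduces via Frobenius duality to the case $k=0$ and the application of $\alpha$. Your explicit flagging of the injectivity of $\alpha$ (after the GNS identification) as the point needing care in the reverse direction of (2) is a detail the paper leaves implicit, but it is the same argument.
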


\begin{proof}
We use the fact, coming from Theorem 2.5, that with $F(k,l)=Hom(u^{\otimes k},u^{\otimes l})$, we have inclusions of vector spaces $F(k,l)\subset F_{X,I}(k,l)$. Moreover, once again by Theorem 2.5, the relations coming from the elements of the category formed by the spaces $F(k,l)$ present $X_{G,I}$. Thus, the relations coming from the elements of $F_{X,I}$ present $X_{G,I}$ as well.

With this observation in hand, our assertions follow from Proposition 4.2:

(1) According to Proposition 4.2 (1) and (2) the unit and conjugation axioms are satisfied, so the spaces $F_{X,I}(k,l)$ form a tensor category precisely when the remaining axioms, namely the composition and the tensor product one, are satisfied. Now by assuming that these two axioms are satisfied, $X$ follows to be $I$-saturated, by the above observation.

(2) Since we already have inclusions in one sense, the equality $F_{X,I}=F$ from the statement means that we must have inclusions in the other sense, as follows:
$$F_{X,I}(k,l)\subset F(k,l)$$

By using now Proposition 4.2 (3), it is enough to discuss the case $k=0$. And here, assuming that we have $\xi\in F_{X,L}(0,l)$, the following condition must be satisfied:
$$\sum_{i_1\ldots i_l}\xi_{i_1\ldots i_l}x_{i_1}^{e_1}\ldots x_{i_l}^{e_l}=\sum_{j_1\ldots j_l\in I}\xi_{j_1\ldots j_l}$$

By applying now the morphism $\alpha:C(X_{G,I})\to C(G)$, we deduce that we have:
$$\sum_{i_1\ldots i_l}\xi_{i_1\ldots i_l}\sum_{j_1\ldots j_l\in I}u_{i_1j_1}^{e_1}\ldots u_{i_lj_l}^{e_l}=\sum_{j_1\ldots j_l\in I}\xi_{j_1\ldots j_l}$$

Now recall that $F(0,l)=Fix(u^{\otimes l})$ consists of the vectors $\xi$ satisfying:
$$\sum_{i_1\ldots i_l}\xi_{i_1\ldots i_l}u_{i_1j_1}^{e_1}\ldots u_{i_lj_l}^{e_l}=\xi_{j_1\ldots j_l},\forall j_1,\ldots,j_l$$

We are therefore led to the conclusion in the statement.
\end{proof}

It is quite unclear on how to advance on these questions, and a more advanced algebraic trick, in the spirit of those used in \cite{bme}, seems to be needed. Nor is it clear on how to explicitely ``capture'' the relevant subgroup $G\subset G^+(X)$, in terms of our given manifold $X=X_{G,I}$, in a direct, geometric way. Summarizing, further improving Theorem 3.4 above is an interesting question, that we would like to raise here.

\end{document}